
\documentclass[final,12pt,times,english,twoside,reqno]{amsart}
\usepackage{amsmath,amssymb,mathrsfs,color}
\usepackage{verbatim,setspace,enumerate}
\usepackage{soul,cite,cancel,ulem,graphicx}
\usepackage[modulo]{lineno}
\modulolinenumbers[2]

\usepackage{hyperref,enumitem}
\usepackage{mathtools}
\usepackage{geometry}
\usepackage[utf8]{inputenc}
\usepackage[T1]{fontenc}
\onehalfspacing


\let\strokel\l

\newtheorem{theorem}{Theorem}[section]

\newtheorem{remark}[theorem]{Remark}
\newtheorem{definition}[theorem]{Definition}

\newtheorem{maintheorem}{Theorem}

\newcommand{\Addresses}{{
  \bigskip
  \footnotesize
$^{1}$ Department of Mathematics, University of Palermo,  
Via Archirafi 34, 90123 Palermo, ITALY,
email:dipiazza@unipa.it; Orcid ID: 0000-0002-9283-5157
\\
$^{2}$   Department of Mathematics and Computer Sciences,
     University of  Perugia,  Via Vanvitelli  1, 06123 Perugia, ITALY,
     email: anna.sambucini@unipg.it; Orcid ID: 0000-0003-0161-8729	
}}

\title[Selections of integrable multifunctions ...]{
Selections of integrable multifunctions in  arbitrary Banach spaces}
 \subjclass[2020]{28B20, 28C15, 49J53}
 \keywords{gauge, integrable selection, multivalued integration, Aumann integral}
\author{ Luisa Di Piazza$^{1}$  and Anna Rita Sambucini$^{2}$}

\begin{document}
\begin{abstract}
We find the origin of the integration theory for multifunctions in the sixties in the pioneering works of G. Debreu and R. Aumann, Nobel prizes for the Economy in 1983 and in 2005, respectively.
The Aumann integral 
is defined by means the integrals of measurable selections of the multifunctions. An important tool for the existence of measurable selections is the
Kuratowski and Ryll-Nardzewski theorem, although this famous result needs the separability of the range space.
Other definitions of multifunction integrals, that are not based on selections have been developed
such as Pettis, Henstock, McShane, Birkhoff, Kurzweill-Henstock-Pettis, or variationally integrals.
However  to obtain good properties of such  integrals for multifunctions 
one also needs to study   "nice" properties of selections.

This chapter is devoted to  selection results in the framework 
of an arbitrary Banach space
(non necessarily separable).
In particular we address our attention to the existence of selections
integrable of the same sense of the given multifunction
 and to their applications to the problem of 
representing a multifunction as a translation of a multifunction with better integrable properties by means of
one of its integrable selections.
\end{abstract}
\date{\today}


\maketitle

\tableofcontents

\section{Introduction: a historical point of view}

 The multivalued calculus
 has become an important tool in several areas: for example in control theory, in differential inclusions, 
in economic analysis when equilibria problems are faced 
 \cite{A2005,CCS,Mesquita2025,C2004,G2022,F2022,Satco2008,Satco2013,Sik-2007,PMS,H,M2025}. 
In this last topic the use of the theory of multifunctions has appeared for the first time in the pioneering works  of G. Debreu and R. J. 
Aumann  \cite{D1965,A1965}. The Debreu integral is in some way a multivalued version of the Bochner integral and is defined by means of a suitable embedding of the range space into a Banach space. The Aumann integral is defined via the integrability of measurable selections.
It always exists but it could be empty.

A most applicable theorem on the existence of “good” selections
is the celebrated Kuratowski and Ryll-Nardzewski  Theorem  \cite{KRN}, which holds in a separable space. This is also one of the reasons  why
in the next three decades  the multivalued integration theory was mainly concerned  with multifunctions whose
values are subsets of a separable Banach space. 

 Even though the definition of Aumann integrability
 is 
 very natural and  useful in applications, it lacks several good properties.
 Therefore, after this work, various notions of integrals for multivalued maps
 have been developed using different
 techniques,  see for example 
\cite{bcs2014,bmis,BS1,CCGS,ccgis,CR,danilo,cgis2023,cgis2022,CG,DPM2014,EH2000,DPM2005,K2025,Kalita2024}
 that are not based on selections. 
  However to obtain 
good properties for applications  of that types of integral  some
selection results are needed. 
 In this chapter we examine the state of the art in the  non-separable case 
 \cite{CKR2009,CKR2010, CDPMS1,CDPMS2}
and we limit ourselves in particular to considering the case of the  Pettis and the Pettis type integrals 
\cite{C1,DPM2005,DPM2013,DPM2009,EH2000,K2021,M2011,Mu2025,C2001}
 and the gauge type integrals
  \cite{CDPMS1,CDPMS2,Kalita2025,BS1,bcs2014,BM,bmis,CCGS,ccgis,CDPMS3,CDPMS4,CS1,cr2005,CG,DPM2006a}.
Many of the results mentioned above were obtained  in fruitful scientific collaboration by the authors with A. Boccuto,   B. Bongiorno, D. Candeloro, V. Marraffa and  K. Musia\strokel.
 \\
Let us take a small step back. 
In the scalar case
the gauge integral was introduced as neither the Riemann nor the Lebesgue integrals fully address the existence of primitives or the issue of differentiating under the integral sign.

 Denjoy (1912)  and Perron (1914) indipendently developed an integral that is more general than the Lebesgue one and extends the fundamental theorem of calculus, resolving these problems. 
 However, the methods they employed were quite complex.
 In the 1960s Kurzweil and Henstock, indipendently, proposed a new definition
 (equivalent to those of  Denjoy and Perron)
 powerful to include every  derivative and with  the advantage
  that this technique is simple and requires only small modifications with respect to the Riemann one.

As mentioned earlier, the construction of the gauge integrals closely resembles that of the Riemann integral, with one key difference: instead of using the mesh size to measure the fineness of a tagged partition, 
 a gauge $\gamma$ is used, which need not be uniform in the domain 
 $\Omega$ of the function $f$.
The idea is to choose the gauge $\gamma$ according to the function $f$ that we want to integrate:
where f is badly-behaved, $\gamma$ is chosen to be small. 
The condition {\it whenever P is a $\gamma$-fine
tagged partition of\, $\Omega$}  never vacuous is very important:
its proof basically relies on  a compactness argument and was given in $\Omega = [0,1]$ first in 1895 by 
Cousin who was a student of Poincaré.  This result was rediscovered by Lebesgue  in his  thesis in 1903 and it was known 
for a long time as the Borel-Lebesgue Theorem.
It is now occasionally referred to as Thomson's Lemma, as he extended it to encompass full covers.\\
The Henstock–Kurzweil definition of the gauge integral is now easy to state: it is just the Riemann integral with “mesh $\delta$” replaced by “$\gamma$-fine”.
 To integrate a function $f$ taking values in a Banach space $(X, \Vert \cdot \Vert)$ to obtain an integral value $v \in  X$, just replace absolute values by norms 
\cite{DPM2013,DPM2009,DPM2006,DPM2006a,DPM2005,CS1,M2025,cmn,K,K2002}.
\\
Similarly, the integrals of McShane and  of Birkhoff are defined; in the former it is not required that the tags belong to the corresponding set,
 in the second case  unconditionally convergent series are considered 
 \cite{F,dp-p,DM,Po,Po1,M,FMNR,cr2005,CR,CG,Bir,BM,BS1,R1,R2,R3,F1994}.
 This is actually  the formulation of the Birkhoff integral given by Fremlin  \cite{F1994} and it can be considered a gauge integral because Naralenkov and Solodov \cite{nara,solo}
  independently showed that the Birkhoff integral is equivalent to that of McShane with measurable gauges.
When the target space is a separable Banach space, Pettis, McShane and Birkhoff are all the same.
Additionally,  a variational version of the gauge integral can be considered \cite{Po1,dmm16,BDpM2,K2002,CDPMS1,CDPMS2,CDPMS3}, for example to apply them to differential inclusions,  \cite{CCS,G2022,Mesquita2025,F2022,C2004}. 
 For a comprehensive treatment of these applications see also  the Special Issue of  Czechoslovak Mathematical Journal  \cite{T2025},  dedicated to  Jaroslav Kurzweil and the references therein.
In the scalar case the variational integrals   were introduced by Thomson and Pfeffer  to generate Borel measures in $\mathbb{R}$.\\
\section{Definitions and basic facts}\label{due}
Let  $(\Omega, \Sigma, \mu)$ be a complete, finite measure space  and $X$ be a Banach space  with  dual $X^*$ whose 
 closed unit ball  is denoted by  $B_{X^*}$. 
We denote by  $cwk(X)$  the family of  all convex and non-empty, weakly compact subsets of $X$ and by $ck(X)$ the family of all  compact members of $cwk(X)$ and  $d_H$ is the Hausdorff distance in $cwk(X)$.\\

     For every $C \in cwk(X)$ the {\it support
  function of} $C$ is denoted by $s( \cdot, C)$ and defined on $X^*$ by 
  $s(x^*, C) = \sup \{ \langle x^*,x \rangle : \ x  \in C\}$. 
  For all unexplained definitions we refer to the book of Castaing and Valadier \cite{CV} or the book of Hu and Papageorgiou\cite{hp}.\\

  A map $\Gamma: \Omega \to 2^X\setminus\{\emptyset\}$
 is called a {\it multifunction}. 
  $\Gamma$ is said to be

 \begin{description}
 \item[$m_1)$] {\it Effros measurable} (or simply {\it measurable})  if for each open set $O \subset X$, the set 
 $ \{t\in \Omega: \Gamma(t)\cap O \neq \emptyset\} \in \Sigma$.
\item[$m_2)$] 
 {\it scalarly measurable} if for every $ x^* \in X^*$, the map
  $s(x^*,\Gamma(\cdot))$ is measurable. 
\item[$m_3)$]  {\it Bochner measurable}  if there exists a sequence of simple multifunctions $(\Gamma_n)_n$ such that
$\lim_{n\rightarrow \infty}d_H(\Gamma_n(t),\Gamma(t))=0$, a.e. in $ \Omega$,
where a multifunction is simple if  there exist $E_1,..., E_k $ pairwise disjoint measurable subsets of $\Omega$   with
 $\cup_{j=1}^k E_j = \Omega$  and 
  $\Gamma$ is constant on each $E_j$.
\end{description}
It is well known that the $m_1)$ 
assumption
of a $cwk(X)$-valued  multifunction
 yields   $m_2)$ (if $X$ is separable  the reverse implication is also true). 
 Moreover, $m_3)$ implies $m_1)$  (see \cite{hp}). The reverse implication fails (see \cite[Example 3.8]{CDPMS1}).\\

\noindent Given a multifunction $\Gamma$, a
 function $f:\Omega\to X$ is called a {\it selection of } $\Gamma$ if $f(t) \in\Gamma(t)$,   for every $t\in \Omega$.\\
Given a  measurable multifunction $\Gamma$ 
 we recall that  its  {\it Aumann integral}, see \cite{A1965},  is the set: 
\begin{eqnarray*}
{\scriptstyle(A)}\int_{\Omega} \Gamma d\mu := \Big\{ \int_{\Omega} f d\mu: \quad f \in L^1(\mu)\,\,\, \& \,\,\, f \mbox{ is a selection  of } \Gamma \Big\}.
\end{eqnarray*}

 The great interest in the existence of measurable selections was motivated for example by
 the applications in differential inclusions,  
  control theory, mathematical models for Economy
(see  \cite{C2004,F2022,G2022,Mesquita2025,Satco2008,Satco2013,Sik-2007,H}
 and the references therein) 
   and multivalued integration.  As said in the Introduction, 
the Aumann integral always exists but it could be empty. 
A keystones result for the existence of measurable selections for a multivalued function  is the 
Kuratowski and Ryll-Nardzewski's Theorem \cite{KRN}, \cite[Theorem III.30]{CV}
 which guarantees the existence of measurable selections for 
measurable multifunctions.
The drawback of this results is   the requirement of the
separability of the range  space. 
In 2009 Cascales, Kadets and Rodrigues 
\cite{CKR2009,CKR2010}
 have been removed the separability contrains of the range space.
In fact they proved the following beautiful result: 
{\it 
each scalarly measurable multifunction taking values in the family of weakly compact (non necessarily convex) sets of a general Banach space 
possesses scalarly measurable selections}.
For other details on measurable selections 
we refer the reader to \cite{CV,H}.
 
   The Aumann integral  is devoid  of good properties as closedness, convexity, compactness and convergence under a  sign integral.   For this reason
    different notions of integrals for multifunctions have been developed.
    The first one was the Pettis multivalued integral introduced first by Castaing and its school \cite{C1,C2,EH2000,CV,Z1996,Z2000}. 
    In Section \ref{tre} we summarize some of the basic results on this multivalued integral, while 
   in the subsequent sections we provide  an overview of the selection results known in the literature for Pettis-type  and  gauge multivalued integrals.


   \section{The multivalued Pettis integral}\label{tre}
  \begin{definition} \rm  A multifunction $\Gamma:\Omega\to cwk(X)\,  $ is said to be
 {\it  Pettis integrable}  in $cwk(X) \,  $ if
 \begin{description} 	
\item[\ref{tre}.a)]  the function $s(x^*,\Gamma(\cdot))$ is Lebesgue integrable on $\Omega$ for each	$x^* \in X^*$ ;
\item[\ref{tre}.b)]   for each  $A \in \Sigma$
 		 a set $M_{\Gamma}(A)\in cwk(X) $ exists  such that 
 	      \begin{eqnarray}\label{Pettis}
 	      s(x^*,M_{\Gamma}(A))=\int_As(x^*,\Gamma(t) )\,dt,  \quad \forall\,\, x^* \in X^*.
 	      \end{eqnarray}
 		We set  ${\scriptstyle (P)}\displaystyle{\int_A}\Gamma \,d\mu := M_{\Gamma}(A)$.
 	 \end{description}
 \end{definition} 
 
 When the range space $X$ is separable 
 we have the following 
 summarization of  different results given by H. Ziat, K. El Amri and  C. Hess, C. Cascales, V. Kadets and J. Rodr\'iguez.

	\begin{maintheorem}{\rm (\cite{EH2000, Z1996,Z2000}, \cite[Chap. V]{CV}, \cite[Theorem A]{CKR2009})}
\label{compilation}	
	Let $X$ be a    separable Banach space and let $\Gamma:\Omega\to {cwk(X)}$ be a  multifunction.   The following  conditions are equivalent:
	\begin{description}
	\item[\ref{compilation}.1)] $\Gamma$ is  Pettis integrable in ${cwk(X)}$;
	\item[\ref{compilation}.2)] the family $W_{\Gamma}=\left\{s(x^*, \Gamma (\cdot)): \ x^* \in B_{X^*} \right\}$ is uniformly integrable; 	
	\item[\ref{compilation}.3)] the family $W_{\Gamma}$ is consisting  of measurable functions and all  scalarly measurable selections of  \, $\Gamma$ are Pettis integrable.	
\end{description}	
	In this case, for each  $A \in \Sigma$
	 \begin{eqnarray}\label{eq-P}
	{\scriptstyle  (P)}\int_A \Gamma d\mu =\left\{ {\scriptstyle (P)}\int_A f d\mu:  \ \ f \in S_P(\Gamma)   \right\}, 
	 \end{eqnarray}	
	 where $S_P(\Gamma)$ denotes the family of Pettis integrable selections of \, $\Gamma$. 	
	 \end{maintheorem}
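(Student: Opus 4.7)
The strategy is to establish the three-way equivalence via the pairs $(\ref{compilation}.1)\Leftrightarrow(\ref{compilation}.2)$ and $(\ref{compilation}.2)\Leftrightarrow(\ref{compilation}.3)$, and then to prove the representation formula (\ref{eq-P}) separately. Two classical tools underpin the whole argument: the H\"ormander embedding $C\mapsto s(\cdot,C)$ of $cwk(X)$ into the positively homogeneous sublinear functionals on $X^*$, which recognizes $cwk(X)$-valued set functions by their support functions; and the Dunford--Pettis characterization that a scalarly measurable $f:\Omega\to X$ is Pettis integrable if and only if $\{\langle x^*,f\rangle:x^*\in B_{X^*}\}$ is relatively weakly compact in $L^1(\mu)$, equivalently uniformly integrable.

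For $(\ref{compilation}.1)\Rightarrow(\ref{compilation}.2)$: by (\ref{Pettis}) the set function $\nu:=M_\Gamma$ is a scalarly countably additive $cwk(X)$-valued measure; an Orlicz--Pettis type argument applied through the H\"ormander embedding upgrades scalar to Hausdorff countable additivity, so $d_H(\nu(A_n),\{0\})\to 0$ whenever $\mu(A_n)\to 0$. The family of signed scalar measures $\nu_{x^*}(A):=\int_A s(x^*,\Gamma)\,d\mu=s(x^*,\nu(A))$ is thus uniformly $\sigma$-additive in $x^*\in B_{X^*}$, and a Vitali--Hahn--Saks argument converts this into uniform absolute continuity of the total variations $|\nu_{x^*}|(A)=\int_A|s(x^*,\Gamma)|\,d\mu$, which is exactly uniform integrability of $W_\Gamma$. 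For the converse $(\ref{compilation}.2)\Rightarrow(\ref{compilation}.1)$ I would set $p_A(x^*):=\int_A s(x^*,\Gamma)\,d\mu$, verify that $p_A$ is sublinear and weak${}^*$ continuous on $B_{X^*}$ using uniform integrability and dominated convergence (the pointwise weak${}^*$ continuity of each $s(\cdot,\Gamma(t))$ coming from weak compactness of $\Gamma(t)$), and invoke the H\"ormander characterization to produce $M_\Gamma(A)\in cwk(X)$ with $s(\cdot,M_\Gamma(A))=p_A$.

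The direction $(\ref{compilation}.2)\Rightarrow(\ref{compilation}.3)$ is the easy one: any scalarly measurable selection $f$ satisfies the sandwich $-s(-x^*,\Gamma(\cdot))\le\langle x^*,f(\cdot)\rangle\le s(x^*,\Gamma(\cdot))$, so the family $\{\langle x^*,f\rangle:x^*\in B_{X^*}\}$ inherits uniform integrability from $W_\Gamma$ and $f$ is Pettis integrable by Dunford--Pettis. For $(\ref{compilation}.3)\Rightarrow(\ref{compilation}.2)$ the separability of $X$ enters decisively through the Castaing representation: Kuratowski--Ryll-Nardzewski yields a sequence $(f_n)$ of measurable selections with $\Gamma(t)=\overline{\{f_n(t):n\ge 1\}}$, whence $s(x^*,\Gamma(t))=\sup_n\langle x^*,f_n(t)\rangle$; Pettis integrability of each $f_n$ yields uniform integrability of the countable family $\{\langle x^*,f_n\rangle:x^*\in B_{X^*},\,n\in\mathbb{N}\}$, and a monotone stabilization on the finite partial suprema $\max_{k\le n}\langle x^*,f_k\rangle$ transfers uniform integrability to the whole $W_\Gamma$.

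Finally, (\ref{eq-P}) is the main obstacle. The inclusion $\supseteq$ is immediate: if $f\in S_P(\Gamma)$ then $\langle x^*,(P)\int_A f\,d\mu\rangle=\int_A\langle x^*,f\rangle\,d\mu\le s(x^*,M_\Gamma(A))$ for every $x^*$, so $(P)\int_A f\,d\mu\in M_\Gamma(A)$ by Hahn--Banach separation. The reverse inclusion is where the real work lies: given $y\in M_\Gamma(A)$ one must manufacture a Pettis integrable selection $f$ of $\Gamma$ with $(P)\int_A f\,d\mu=y$. My plan is a Strassen-type iterative matching: fix a countable weak${}^*$-dense sequence $(x_k^*)\subset B_{X^*}$ (separability of $X$), and at stage $n$ construct a Pettis integrable selection matching the first $n$ linear functionals on $y$ by taking convex combinations of Castaing selections on finer measurable partitions of $A$; then pass to a limit inside the uniformly integrable hull supplied by $(\ref{compilation}.2)$, applying Mazur's lemma within the convex set $\Gamma(t)$ to keep the limit a selection almost everywhere. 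The delicate point — and the technical heart of the theorem — is precisely verifying that the limit remains pointwise in $\Gamma(\cdot)$, where convexity and weak compactness of the values, together with separability of $X$, are all indispensable.
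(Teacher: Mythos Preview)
The paper itself does not prove this statement; it is presented as a compilation of results from the cited references, so there is no in-paper proof to compare against. However, the paper does sketch a proof of the parallel HKP statement (Theorem~\ref{i-ii}), and that sketch exposes where your argument for $(\ref{compilation}.3)\Rightarrow(\ref{compilation}.2)$ breaks down.

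Your step ``Pettis integrability of each $f_n$ yields uniform integrability of the countable family $\{\langle x^*,f_n\rangle:x^*\in B_{X^*},\,n\in\mathbb N\}$'' is the gap: Pettis integrability of a single $f_n$ controls $\{\langle x^*,f_n\rangle:x^*\in B_{X^*}\}$ for that $n$ alone, with no uniformity across $n$, and the subsequent ``monotone stabilization'' does not repair this, since the pointwise supremum of a uniformly integrable family need not even lie in $L^1$. The route used in the cited literature, and mirrored in the paper's proof of Theorem~\ref{i-ii}, bypasses $(\ref{compilation}.2)$ altogether: for each fixed $x^*$ one applies a selection theorem to the auxiliary multifunction $G(t)=\{x\in\Gamma(t):\langle x^*,x\rangle=s(x^*,\Gamma(t))\}$ to obtain a scalarly measurable selection $g_{x^*}$ of $\Gamma$ with $\langle x^*,g_{x^*}\rangle=s(x^*,\Gamma)$; hypothesis $(\ref{compilation}.3)$ makes $g_{x^*}$ Pettis integrable, so every $x^*$ attains its supremum on the closed convex hull of $\{(P)\int_A f\,d\mu:f\in S_P(\Gamma)\}$, and James's characterization of weak compactness then shows this set lies in $cwk(X)$ with the correct support function. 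This gives $(\ref{compilation}.3)\Rightarrow(\ref{compilation}.1)$ directly. The identity \eqref{eq-P} without closure then reduces, in the separable case, to showing that the integral-selection set is already closed, which is lighter than the Strassen-type construction you outline.
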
 
		If $X$ is a  general Banach space   we have
		
	\begin{maintheorem}{\rm (\cite[Corollary 2.3 and Theorem 4.2 ]{CKR2009}, \cite[Theorem 4.1]{CKR2009})}
\label{compilationb}	
	Let $X$ be  any  Banach space and let $\Gamma:\Omega\to {cwk(X)}$ be a scalar measurable   multifunction.    Then  {\bf \ref{compilation}.1)} $ \Leftrightarrow$  {\bf \ref{compilation}.3)}  and 
		 {\bf  \ref{compilation}.1)}  $ \Rightarrow$ {\bf \ref{compilation}.2)}. 
	 \end{maintheorem}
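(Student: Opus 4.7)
The plan is to treat the three implications separately. The key ingredients are: the pointwise bound $-s(-x^*,\Gamma(t))\le\langle x^*,f(t)\rangle\le s(x^*,\Gamma(t))$ for any selection $f$ of $\Gamma$; the fact that $M_\Gamma(A)\in cwk(X)$ is weakly compact in $X$ and hence $w^*$-closed when canonically embedded in $X^{**}$; and the Cascales-Kadets-Rodriguez scalarly measurable selection theorem recalled before Theorem \ref{compilation}.

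For \ref{compilation}.1)$\Rightarrow$\ref{compilation}.3), measurability of $W_\Gamma$ is part of the scalar measurability of $\Gamma$. If $f$ is any scalarly measurable selection, the pointwise bounds above combined with \ref{compilation}.1) give $\langle x^*,f\rangle\in L^1(\mu)$, so $f$ is Dunford integrable. The Dunford integral $\int_A^D f\in X^{**}$ satisfies $\langle x^*,\int_A^D f\rangle\le s(x^*,M_\Gamma(A))$ for every $x^*$. Since $M_\Gamma(A)\in cwk(X)$ is $w^*$-closed when viewed inside $X^{**}$, Hahn-Banach separation in the duality $(X^{**},w^*)$ forces $\int_A^D f\in M_\Gamma(A)\subset X$, showing that $f$ is Pettis integrable. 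For \ref{compilation}.1)$\Rightarrow$\ref{compilation}.2), each scalar measure $\nu_{x^*}(A):=s(x^*,M_\Gamma(A))=\int_A s(x^*,\Gamma)\,d\mu$ is countably additive, and the family $\{\nu_{x^*}\}_{x^*\in B_{X^*}}$ is uniformly bounded because $M_\Gamma(\Omega)$ is norm-bounded; a Vitali-Hahn-Saks / Nikod\'ym argument then yields uniform $\mu$-absolute continuity of the family, equivalently uniform integrability of the densities $W_\Gamma$.

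The substantive implication is \ref{compilation}.3)$\Rightarrow$\ref{compilation}.1): for each $A\in\Sigma$ one has to produce a weakly compact convex set $M_\Gamma(A)\subset X$ whose support function equals $p_A(x^*):=\int_A s(x^*,\Gamma)\,d\mu$. My candidate is $M_\Gamma(A):=\overline{\mathrm{co}}^{\,w}\{(P)\int_A f\,d\mu:f\in S_P(\Gamma)\}$. The inequality $s(x^*,M_\Gamma(A))\le p_A(x^*)$ is immediate from the pointwise bound. For the reverse, given $x^*$ and $\varepsilon>0$, I would apply the Cascales-Kadets-Rodriguez selection theorem to the auxiliary multifunction $t\mapsto\{x\in\Gamma(t):\langle x^*,x\rangle\ge s(x^*,\Gamma(t))-\varepsilon\}$, which still takes weakly compact convex values in $X$, thereby producing a scalarly measurable near-maximiser; hypothesis \ref{compilation}.3) upgrades it to a Pettis integrable selection whose integral over $A$ exceeds $p_A(x^*)-\varepsilon\mu(A)$, and letting $\varepsilon\downarrow 0$ yields the desired equality. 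The main obstacle will be weak compactness of $M_\Gamma(A)$ in $X$: my plan is to observe that $p_A$ is a $w^*$-lower semicontinuous sublinear functional on $X^*$, so H\"ormander's theorem identifies it with the support function of a unique $w^*$-closed convex bounded set $\tilde C\subset X^{**}$, and then to show $\tilde C\subset X$ by representing every element of $\tilde C$ as the Dunford (hence, by the argument used for \ref{compilation}.1)$\Rightarrow$\ref{compilation}.3), Pettis) integral of a suitable scalarly measurable selection of $\Gamma$. This last step is exactly where scalar measurability without Pettis integrability of all selections would fail, accounting for the one-way character of \ref{compilation}.1)$\Rightarrow$\ref{compilation}.2) in the non-separable setting.
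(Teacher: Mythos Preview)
The paper does not prove Theorem~\ref{compilationb} itself; it merely cites \cite{CKR2009}. Nevertheless, the template the paper uses for the analogous HKP result (Theorem~\ref{i-ii}) is the relevant benchmark, and against it your proposal has two weak points.

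For $\ref{compilation}.3)\Rightarrow\ref{compilation}.1)$ your plan for weak compactness is not executable as written. You produce $\tilde C\subset X^{**}$ with support functional $p_A$ and then assert that every $z\in\tilde C$ can be realised as the Dunford integral of some scalarly measurable selection of $\Gamma$. No mechanism is given for producing such a selection from an arbitrary $z$ satisfying $z(x^*)\le\int_A s(x^*,\Gamma)\,d\mu$; this is a barycentric representation problem that does not follow from anything you have set up. The clean route---the one the paper uses in the HKP setting---is to avoid $X^{**}$ altogether: for each $x^*$ apply the CKR selection theorem to the \emph{exact} maximiser multifunction $t\mapsto\{x\in\Gamma(t):\langle x^*,x\rangle=s(x^*,\Gamma(t))\}$ (scalarly measurable by \cite[Lemma~2.4]{CKR2009}), obtain a Pettis integrable selection $g$ via hypothesis~\ref{compilation}.3), and note that $\int_A g$ attains the supremum of $x^*$ on $\overline{IS_\Gamma(A)}$. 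James' characterisation of weak compactness then gives $\overline{IS_\Gamma(A)}\in cwk(X)$ directly. Your $\varepsilon$-maximisers yield only the value of the supremum, not its attainment, so they cannot feed James' theorem.

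For $\ref{compilation}.1)\Rightarrow\ref{compilation}.2)$ the Vitali--Hahn--Saks/Nikod\'ym invocation is a genuine gap. Those theorems concern sequences (or setwise-convergent families), and uniform boundedness of an uncountable family of $\mu$-continuous signed measures does \emph{not} imply uniform $\mu$-absolute continuity: take $\nu_n=n\mathbf{1}_{[0,1/n]}\,d\lambda$ on $[0,1]$. You need an argument that exploits the fact that the measures $\nu_{x^*}(\cdot)=s(x^*,M_\Gamma(\cdot))$ all arise from a single $cwk(X)$-valued integral; one route (after translating so that $0\in\Gamma(t)$, hence $M_\Gamma(A)\subset M_\Gamma(\Omega)$ for every $A$) is to argue via the weak compactness of $M_\Gamma(\Omega)$ and the countable additivity of suitable $X$-valued selections of the indefinite integral, but this is precisely the substantive content of \cite[Theorem~4.1]{CKR2009} and cannot be replaced by a one-line appeal to V--H--S.

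Your argument for $\ref{compilation}.1)\Rightarrow\ref{compilation}.3)$ is correct and matches the standard proof.
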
 		
				 
		Moreover, in an arbitrary Banach space,  according to \cite[Theorem 2.6]{CKR2009}, the equality \eqref{eq-P} does not hold in general, and we have  the following:
		\begin{eqnarray}\label{PHK}
		{\scriptstyle  (P)}\int_A \Gamma d\mu =
		\overline{ \left\{ {\scriptstyle (P)}\int_A f d\mu:  \ \ f \in S_P(\Gamma)   \right\}}, \quad \mbox{for every } A \in \Sigma.		
		\end{eqnarray}
				

\section{The Henstock-Kurzweil-Pettis integral}\label{quattro}
A straightforward extension of the Pettis integral for multifunctions can be achieved by substituting the Lebesgue integrability of the support functions $s(x^*,\Gamma)$  with  
a weaker integrability, for example
 the Henstock-Kurzweil integrability.
This produces the Henstock-Kurzweil-Pettis integral that, as
we shall see, can be described in terms of 
the Pettis set-valued integral.\\

From now on we consider  (multi)functions defined in the unit interval of $\mathbb{R}$, equipped with the usual topology and the Lebesgue measure $\lambda$.
$\mathscr{L}$ is the family of all Lebesgue measurable subsets of $[0,1]$ and $\mathscr{I}$ denotes the collection of closed subintervals of $[0,1]$.
If $I \in \mathscr{I}$ then $\vert I \vert$ is its lenght.
\\

To introduce the Henstock-Kurzweil integral for single valued functions 
we recall that
\begin{enumerate}[label=\alph*)]
\item 
a {\it partition} ${\mathscr P}$ {\it in} $[0,1]$ is a collection $\{(I_1,t_1),$ $ \dots,(I_p,t_p) \}$,
  where $I_i$ are nonoverlapping subintervals of $[0,1]$, $t_i \in [0,1]$, $i=1,\dots,p$.
 If $\cup^p_{i=1}I_i=[0,1]$, then  ${\mathscr P}$ is {\it a partition of } $[0,1]$.
 \item  If   $t_i  \in I_i$, $i=1,\dots,p$,  we say that $P$ is a {\it Perron partition of} $[0,1]$.
\item  A {\it gauge} $\gamma$ on $[0,1]$ is a positive function on $[0,1]$. For a given gauge $\gamma$ on $[0,1]$,
  we say that a partition $\{(I_1,t_1), \dots,(I_p,t_p)\}$ is $\gamma$-{\it fine} if for  $i=1,\dots,p$, it is
  $I_i\subset(t_i-\gamma(t_i),t_i+\gamma(t_i))$.
  \end{enumerate}
  $\Pi_{\gamma}$ and $\Pi_{\gamma}^P$ are  the families of $\gamma$-fine partitions, and $\gamma$-fine Perron partitions of $[0,1]$, respectively.

 \begin{definition}\label{def-H}
 \rm 
  A function $g:[0,1]\to X$ is said to be
  \begin{description}
  \item[\ref{def-H}.a)] {\it  Henstock}
   integrable on $[0,1]$  if there exists   $x \in X$
    such that for every $\varepsilon > 0$ there exists a gauge $\gamma$ on $[0,1]$
such that for each partition
   $\{(I_1,t_1), \dots,(I_p,t_p)\} \in \Pi_{\gamma}^P$, 
   we have
\begin{eqnarray*}
\Big\Vert x -\sum_{i=1}^p g(t_i)|I_i| \Big\Vert <\varepsilon\,.
\end{eqnarray*}
In this case we set $x := {\scriptstyle (HK)}\int_{[0,1]} g d\lambda.$ If $X=\mathbb{R}$ the integral is named  {\it Henstock-Kurzweil  integral}.
 \item[\ref{def-H}.b)] {\it  Henstock-Kurzweil-Pettis}
   integrable on $[0,1]$,  if
   for every $x^* \in X^*$ the function $x^*g$ is Henstock-Kurzweil integrable on $[0,1]$  and 
    there exists   $x \in X$
    such that, for every  $x^* \in X^*$
\begin{eqnarray*}
x^*(x)=  {\scriptstyle (HK)}\int_{[0,1]} x^*(g)\, dt.
\end{eqnarray*}
In this case we set $x := {\scriptstyle (HKP)}\int_{[0,1]} g \,d\lambda.$
\end{description}
 \end{definition}
 
If $g$ is Henstock-(Kurzweil-Pettis) integrable on $[0,1]$ then it is integrable in the same sense  on every $I\in \mathscr{I}$.  
For other results and the properties of the Henstock-(Kurzweil Pettis) integral we refere to  \cite{DPM2005,DPM2006,DPM2009,DPM2013}.

  \begin{definition}\label{HKPdef} \rm  A multifunction $\Gamma:[0,1]\to cwk(X)$\,  is said to be
 {\it  Henstock-Kurzweil-Pettis integrable}  (briefly HKP-integrable)  in $cwk(X) $  if
 \begin{description} 	
\item[\ref{HKPdef}.a)]  the function $s(x^*,\Gamma(\cdot))$ is Henstock-Kurzweil integrable on $[0,1]$ for each	$x^* \in X^*$;
\item[\ref{HKPdef}.b)]   for each  $I\in \mathscr{I}$
 		 a set $M_{\Gamma}(I)\in cwk(X) $ exists  such that 
 	      \begin{eqnarray}\label{Pettis}
 	      s(x^*,M_{\Gamma}(I))={\scriptstyle (HK)}\int_I s(x^*,\Gamma(t))\,dt,  \quad \forall\,\, x^* \in X^*.
 	      \end{eqnarray}
 		We set  ${\scriptstyle (HKP)}\int_I\Gamma \,d\lambda:=M_{\Gamma}(I)$.
 	 \end{description}
 \end{definition} 
We denote by $\mathscr{S}_{HKP}(\Gamma)$ the family of all Henstock-Kurzweil-Pettis integrable
selections of $\Gamma$  and, for every $I \in \mathscr{I}$,  
\[ 
IS_{\Gamma}(I) := \Big\{ {\scriptstyle (HKP)}\int_{I} f d\lambda: \,\, 
f \in \mathscr{S}_{HKP}(\Gamma) \Big\}.\]

\begin{theorem}\label{i-ii}
{\rm \cite[Theorem 1]{DPM2009}} 
Let  $X$ be any Banach space and let $\Gamma:[0,1]\to cwk(X)$ 
be a  scalarly measurable multifunction. The following are equivalent:

\begin{itemize}
\item[$(i)$] \, 
$\Gamma$ is HKP-integrable  in $cwk(X)$;
\item[$(ii)$] \,   each scalarly measurable selection of $\Gamma$ is HK-integrable.
\end{itemize}

	In this case,
	 for each $I \in \mathscr{I}$	
\[
	{\scriptstyle (HKP)}\int_I \Gamma \ d\lambda:= 
	\overline{\Big\{{\scriptstyle (HKP)}\int_I f d\lambda: f \in
		{\mathscr{S}}_{HKP}(\Gamma)\Big\}}\;. 
\]

\end{theorem}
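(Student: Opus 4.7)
The overall strategy is to reduce the HKP case to the Pettis one by translating $\Gamma$ by a carefully chosen HK-integrable scalarly measurable selection, and then invoking Theorem \ref{compilationb}. The two main external inputs are the Cascales--Kadets--Rodr\'iguez selection theorem (applicable in any Banach space to scalarly measurable $cwk(X)$-valued multifunctions) and the equivalence between Pettis integrability \ref{compilation}.1) and the selection condition \ref{compilation}.3) established in Theorem \ref{compilationb}, which equates Pettis integrability of a $cwk(X)$-valued multifunction with the Pettis integrability of all its scalarly measurable selections.

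For $(ii)\Rightarrow(i)$: pick via CKR a scalarly measurable selection $f_0$ of $\Gamma$; by $(ii)$ it is HK-integrable. Set $\Gamma_0(t):=\Gamma(t)-f_0(t)\in cwk(X)$, which is scalarly measurable, contains $0$, and satisfies $s(x^*,\Gamma_0)=s(x^*,\Gamma)-x^*f_0\ge 0$. Applying CKR to the face map $t\mapsto\{y\in\Gamma_0(t):\langle x^*,y\rangle=s(x^*,\Gamma_0(t))\}$ yields a scalarly measurable selection $h_{x^*}$ with $x^*h_{x^*}=s(x^*,\Gamma_0)$; since $f_0+h_{x^*}$ is a scalarly measurable selection of $\Gamma$, it is HK-integrable by $(ii)$, so $s(x^*,\Gamma_0)$ is HK-integrable and, being non-negative, actually Lebesgue integrable. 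The analogous argument for a general scalarly measurable selection $g=f-f_0$ of $\Gamma_0$ yields $|x^*g|\le\max(s(x^*,\Gamma_0),s(-x^*,\Gamma_0))$ with Lebesgue integrable majorant; combined with HK-integrability of $g$ (the difference of two HK-integrable functions) this upgrades $g$ to a Pettis integrable selection. Condition \ref{compilation}.3) is therefore satisfied by $\Gamma_0$, and Theorem \ref{compilationb} certifies $\Gamma_0$ as Pettis integrable in $cwk(X)$, so that $\Gamma=f_0+\Gamma_0$ is HKP-integrable with integral ${\scriptstyle (HK)}\int_I f_0+{\scriptstyle (P)}\int_I\Gamma_0\in cwk(X)$.

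For $(i)\Rightarrow(ii)$ and the integral formula: the sandwich $-s(-x^*,\Gamma)\le x^*f\le s(x^*,\Gamma)$ between HK-integrable bounds, together with the fact that a measurable function trapped between two HK-integrable functions is itself HK-integrable, shows that every scalarly measurable selection $f$ is \emph{scalarly} HK-integrable; upgrading this to \emph{norm} HK-integrability of $f$ is the main obstacle. The plan is to fix an HK-integrable selection $f_0$ (produced by the face construction above) and to verify that $\Gamma-f_0$ is Pettis integrable in $cwk(X)$: its support functions are non-negative differences of HK-integrable functions, hence Lebesgue integrable, and Theorem \ref{compilationb} then closes the loop. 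An arbitrary scalarly measurable selection decomposes as $f=f_0+(f-f_0)$ with $f-f_0$ Pettis integrable by Theorem \ref{compilationb}; the uniform integrability \ref{compilation}.2) of $\{s(x^*,\Gamma-f_0):x^*\in B_{X^*}\}$ should allow Birkhoff-type (measurable-gauge McShane) sums to approximate the Pettis integral of $f-f_0$ uniformly, so that, added to the gauge-fine Henstock sums of $f_0$, they yield norm-convergent Henstock sums for $f$. Finally, the formula ${\scriptstyle (HKP)}\int_I\Gamma=\overline{\{{\scriptstyle (HKP)}\int_I f\,d\lambda:f\in\mathscr{S}_{HKP}(\Gamma)\}}$ follows by comparing support functions: $\supseteq$ is immediate since each ${\scriptstyle (HK)}\int_I x^*f\le s(x^*,{\scriptstyle (HKP)}\int_I\Gamma)$, while the reverse inclusion uses the face selection $f_{x^*}$ realizing $x^*f_{x^*}=s(x^*,\Gamma)$ pointwise to give ${\scriptstyle (HK)}\int_I x^*f_{x^*}=s(x^*,{\scriptstyle (HKP)}\int_I\Gamma)$, so that both closed convex sets share identical support functions.
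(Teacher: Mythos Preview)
There is a misreading of condition (ii) that derails your argument. In the statement, ``HK-integrable'' is a slip for ``HKP-integrable'' (Henstock--Kurzweil--Pettis), as the formula involving $\mathscr{S}_{HKP}(\Gamma)$ and the paper's own proof make clear. You instead aim in $(i)\Rightarrow(ii)$ at \emph{norm} Henstock integrability of every scalarly measurable selection. That target is in fact false: as noted later in the paper (citing \cite[Proposition~3.2]{CDPMS1}), there exist variationally McShane---hence Henstock and a fortiori HKP---integrable $cwk(X)$-valued multifunctions possessing scalarly measurable selections that are not Henstock integrable. So the ``main obstacle'' you flag is insurmountable, and your plan to fix an HK-integrable $f_0$ from the face construction is circular, since producing such a selection is precisely what is to be proved. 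The paper's $(i)\Rightarrow(ii)$ is instead immediate: the sandwich gives HK-integrability of $x^*g$ for every $x^*$, and then $-s(-x^*,M_\Gamma(I))\le (HK)\!\int_I x^*g\le s(x^*,M_\Gamma(I))$; weak compactness of $M_\Gamma(I)$ makes its support function Mackey-continuous on $X^*$, so the dominated linear functional $x^*\mapsto (HK)\!\int_I x^*g$ is weak*-continuous, i.e.\ evaluation at a point of $M_\Gamma(I)\subset X$.

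For $(ii)\Rightarrow(i)$ the paper also argues directly, without Theorem~\ref{compilationb}: it shows that each $x^*\in X^*$ attains its supremum on the convex set $\overline{IS_\Gamma(I)}$ (via the face selection $g$ with $x^*g=s(x^*,\Gamma)$, which is HKP-integrable by hypothesis (ii)), and James's theorem then yields $\overline{IS_\Gamma(I)}\in cwk(X)$ with support function $(HK)\!\int_I s(x^*,\Gamma)$. Your route through $\Gamma_0=\Gamma-f_0$ and condition~\ref{compilation}.3) has a gap even under the correct reading of (ii): you establish that each scalarly measurable selection $g$ of $\Gamma_0$ is HKP-integrable with $x^*g\in L^1$, but this does not yield Pettis integrability of $g$---the HKP integral lies in $X$ only for intervals, and scalar $L^1$ alone does not force the Dunford integral over an arbitrary measurable set back into $X$. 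Your support-function argument for the integral formula, on the other hand, is essentially the paper's.
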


\begin{proof} We give here only a sketch of the proof.
If $\Gamma$ is  HKP integrable, then 
 for every $x^* \in X^*$, 
$s(x^*,\Gamma(\cdot))$ is
Henstock-Kurzweil integrable and then measurable.
According to \cite[Theorem 3.8]{CKR2010}  scalarly measurable selections  of $\Gamma$ exist. 
Let $g$ be one of such selections.
Then, for each $x^* \in X^*$  we have
\begin{eqnarray*}
&& -s(-x^*,\Gamma(t))\leq x^*g(t)\leq s(x^*,\Gamma(t))\,,
\\ &&
0\leq x^*g(t)+s(-x^*,\Gamma(t))\leq s(x^*,\Gamma(t))+s(-x^*,\Gamma(t))\,.
\end{eqnarray*}
and the
Henstock-Kurzweil integrability of the function $x^*g$ follows.\\
	Moreover for  each    $I \in \mathscr{I}$
\[	-s(-x^*,M_{\Gamma}(I))\,\leq (HK)\int_I x^*g(t)\,dt\leq
	s(x^*,M_{\Gamma}(I)))\,.
\]
	Since $M_{\Gamma}(I)$ is convex and weakly compact,
	it follows that 
	 $g$ is Henstock-Kurzweil-Pettis-integrable. \\
	Viceversa for every $I \in \mathscr{I}$ the closure of $IS_{\Gamma}(I)$ is convex. We  prove that 
$\overline{IS_{\Gamma}(I)}$	
 is the desired value of the integral. For every $x^* \in X^*$ let 
	\begin{eqnarray}\label{G}
G(t) = : \big\{ x \in \Gamma(t): x^*(x) = s(x^*, \Gamma(t))\big\}.
\end{eqnarray}
	By \cite[Lemma 2.4]{CKR2009} $G$ is scalarly measurable and let $g$ be  a scalarly measurable selection of $G$ and so of $\Gamma$.
	 Hence $g$ is HKP-integrable and 
	$s(x^*, \Gamma)= x^* g$. \\
	By definition
	${\scriptstyle (HKP)}\int_{I} g d\lambda \in IS_{\Gamma}(I)$. 
Moreover, for every $f \in  \mathscr{S}_{HKP}(\Gamma)$, it is
\[x^*\Big ( {\scriptstyle (HKP)}\int_{I} g d\lambda \Big) \geq  x^*\Big ( {\scriptstyle (HKP)}\int_{I} f d\lambda \Big)\]	
and then 
\[x^*\Big ( {\scriptstyle (HKP)}\int_{I} g d\lambda \Big) = \sup_{ x \in IS_{\Gamma}(I)} x^*(x) = s(x^*, \overline{ IS_{\Gamma}(I)}).\]	
This proves, by  James's characterization of weak compactness, that 	$\overline{ IS_{\Gamma}(I)} \in cwk(X)$  and the Henstock-Kurweil-Pettis integrability of $\Gamma$ since
\[ s(x^*, \overline{ IS_{\Gamma}(I)}) = {\scriptstyle (HK)}\int_{I} s(x^*,\Gamma(t)) dt.\]	
\end{proof}
As an application of the existence of HKP integrable selections we obtain that a HKP integrable multifunction is a translation of a Pettis integrable multifunctions by means one of its HKP integrable selections.
More precisely

\begin{theorem}
{\rm \cite[Theorem 1]{DPM2009}} \label{i-iii}
Let  $X$ be any Banach space and let $\Gamma:[0,1]\to cwk(X)$
 be a  scalarly measurable multifunction. The following are equivalent:

\begin{itemize}
\item[$(i)$] \, 
$\Gamma$ is HKP-integrable  in $cwk(X)$;
\item[$(iii)$] \, 
${\mathscr{S}}_{HKP}(\Gamma)\not=\emptyset$   and  for every
$f \in {\mathscr{S}}_{HKP}(\Gamma)$
the multifunction
$G$ defined by $\Gamma(t)=G(t)+f(t)$,  is  Pettis
integrable in $cwk(X)$.
\end{itemize}
\end{theorem}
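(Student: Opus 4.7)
For $(iii)\Rightarrow(i)$, pick any $f\in\mathscr{S}_{HKP}(\Gamma)$ and let $G$ be the Pettis integrable multifunction in $cwk(X)$ furnished by condition $(iii)$. Additivity of the support function yields $s(x^*,\Gamma(t))=s(x^*,G(t))+x^*f(t)$, which is HK-integrable as the sum of a Lebesgue-integrable function and an HK-integrable one. Setting $M_\Gamma(I):=M_G(I)+{\scriptstyle(HKP)}\int_I f\,d\lambda$ produces a Minkowski sum of a $cwk(X)$-element with a vector, hence again in $cwk(X)$, and a direct computation of support functions gives $s(x^*,M_\Gamma(I))={\scriptstyle(HK)}\int_I s(x^*,\Gamma(t))\,dt$ for every $I\in\mathscr{I}$. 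This verifies Definition \ref{HKPdef}, so $\Gamma$ is HKP-integrable.

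For $(i)\Rightarrow(iii)$, Theorem \ref{i-ii} already gives $\mathscr{S}_{HKP}(\Gamma)\neq\emptyset$. Fix $f\in\mathscr{S}_{HKP}(\Gamma)$ and set $G(t):=\Gamma(t)-f(t)\in cwk(X)$. Since $f(t)\in\Gamma(t)$ one has $0\in G(t)$, so $s(x^*,G(\cdot))\geq 0$; and because $s(x^*,G(t))=s(x^*,\Gamma(t))-x^*f(t)$, this support function is also HK-integrable. A non-negative HK-integrable function is Lebesgue integrable, which verifies condition \ref{tre}.a) of Pettis integrability for $G$.

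To secure the remaining condition \ref{tre}.b), the plan is to invoke Theorem \ref{compilationb}: since $G$ is scalarly measurable and $W_G$ consists of measurable functions, it is enough to show that every scalarly measurable selection $h$ of $G$ is Pettis integrable. Given such an $h$, the sum $h+f$ is a scalarly measurable selection of $\Gamma$, so by Theorem \ref{i-ii} it is HKP-integrable, and therefore $h=(h+f)-f$ is HKP-integrable as well. Moreover the sandwich $-s(-x^*,G(t))\leq x^*h(t)\leq s(x^*,G(t))$ between Lebesgue-integrable bounds forces $x^*h\in L^1$ for every $x^*\in X^*$.

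The delicate point, and the main obstacle, is to promote the interval-wise HKP-integrals $w_I:={\scriptstyle(HKP)}\int_I h\,d\lambda\in X$ to a genuine Pettis integral on every $A\in\Sigma$. For an open set $U=\bigsqcup_n I_n$, dominated convergence applied to $x^*h$ gives $\sum_n x^*w_{I_n}=\int_U x^*h$, so the series $\sum_n w_{I_n}$ is weakly convergent; Orlicz--Pettis upgrades this to unconditional norm convergence and defines $w_U\in X$. Outer regularity of the Lebesgue measure, together with the absolute continuity of the scalar primitives $A\mapsto\int_A x^*h$ and the ACG$_*$ character of the HK-primitive, then extends $A\mapsto w_A$ coherently to every $A\in\Sigma$. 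Once this extension is in place each selection of $G$ is Pettis integrable, and Theorem \ref{compilationb} delivers the Pettis integrability of $G$ in $cwk(X)$, completing the proof of $(iii)$.
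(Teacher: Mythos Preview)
The paper only cites this result from \cite{DPM2009} without giving a proof, so there is no paper-proof to compare against; nevertheless your argument for $(i)\Rightarrow(iii)$ has a genuine gap.

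Your reduction via Theorem~\ref{compilationb} to showing that every scalarly measurable selection $h$ of $G:=\Gamma-f$ is Pettis integrable is sound, as are the observations that $0\in G(t)$, that $s(x^*,G)\in L^1$, and that $x^*h\in L^1$ for each $x^*$. The gap is the sentence ``so the series $\sum_n w_{I_n}$ is weakly convergent; Orlicz--Pettis upgrades this\ldots''. Scalar convergence $\sum_n x^*(w_{I_n})=\int_U x^*h$ for every $x^*$ produces only a limit in $X^{**}$; the Orlicz--Pettis theorem requires weak subseries-convergence \emph{in} $X$, i.e.\ that those limits already lie in $X$ --- which is precisely what has to be proved. (Take $x_n=e_n$ in $c_0$: every scalar series $\sum x^*(e_n)$ converges, yet $\sum e_n$ diverges.) The subsequent appeal to outer regularity and the ACG$_*$ property does not address this missing point and is too vague to produce the vectors $w_A\in X$.

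The ingredient you are not using --- and which is fully in line with the paper's own tools, cf.\ the James-theorem step in the sketch of Theorem~\ref{i-ii} and formula \eqref{3.1del3} --- is the weak compactness of $M_G([0,1])$. Since $0\in M_G(I)$ for every $I\in\mathscr{I}$, additivity of the HKP-primitive gives $M_G(I)\subset M_G([0,1])$; combined with $(HKP)\int_I h\in M_G(I)$ this places every finite sum $\sum_j w_{I_j}$ over pairwise non-overlapping intervals inside the fixed weakly compact convex set $K:=M_G([0,1])$. Now $h$ is Dunford integrable, and for arbitrary $A\in\Sigma$ one approximates $A$ in measure by finite unions of intervals $A_n$; the Dunford integrals $w_{A_n}\in K$ converge in $\sigma(X^{**},X^*)$ to the Dunford integral of $h$ over $A$, which therefore lies in $K\subset X$. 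Hence $h$ is Pettis integrable, and Theorem~\ref{compilationb} then yields the Pettis integrability of $G$ in $cwk(X)$.
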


\begin{remark} \rm 
In the Definition \ref{HKPdef} we can substitute $cwk(X)$ with $ck(X)$.
In such a  case  in Theorem \ref{i-ii} condition (i) implies condition (ii) and in Theorem \ref{i-iii}
condition (i) is equivalent to condition (iii), \cite[Theorem 2 and Question 1]{DPM2009}. \\
Moreover, by \cite[Lemma 3.1]{CDPMS4} if $\Gamma$ is a $cwk(X)$-valued HKP integrable multifunction, $f$ is one of its selections integrable in the same sense, and 
if we denote their primitives  by $\varPhi$ and $\phi$ respectively,  then we have
\begin{eqnarray}\label{3.1del3}
\phi(I) \in \varPhi(I), \qquad \mbox{ for every  } \,I \in \mathscr{I}.
\end{eqnarray}

\end{remark}
 \section{The gauge multivalued  integrals}\label{cinque} 
The Pettis and the Henstock-Kurzweil-Pettis multivalued integrals were  possible extensions of the Aumann integral, not the only ones.
We  consider now the gauges type integrals using a different approach: not applied to selections but to the whole multifunction $\Gamma$.\\

On $cwk(X)$ the
   Minkowski addition ($A \, \dot{+}\, B :\,=\big\{a+b:a\in A,\,b\in B \big\}$) and the
   standard multiplication by scalars are considered.  As said before $d_H$ is the Hausdorff distance  and
   $\|A\|:=\sup\{\|x\|\colon x\in{A}\}$.
 We refer to \cite{L1} for  properties of $(cwk(X), d_H, \dot{+}, \cdot)$.\\

  In particular
$cwk(X)$ can be embedded
  as a convex cone 
 into the Banach space  $l_{\infty} (B_{X^*})$
 endowed with the supremum norm $\Vert \cdot \Vert_{\infty}$,  by means the map 
$i:cwk(X) \to  l_{\infty} (B_{X^*}), \,\, \mbox{ defined by } \,\, i(C)(x^*)=s( x^*, C)$
(see \cite[Chapter II]{CV}).
Such a map is called in the literature 
 R{\aa}dstr\"{o}m embedding
and it  is isometric and  linear.
Therefore $cwk(X)$ could be considered a ``near vector space'', ( essentially a vector
space without additive inverses). 
\\

It is also possible to consider  other  R{\aa}dstr\"{o}m embeddings, for example in
 the space $C(T)$, where $T$ is a  compact Hausdorff space \cite{L1}.   In any case, the results of the embedding theorems are independent of the target Banach space considered.
 \\

The  map
$i:cwk(X) \to l_{\infty} (B_{X^*})$ satisfies the following properties:
\begin{description}
\item[R.1)] $i(\alpha A+ \beta C) = \alpha i(A) + \beta i(C)$ for every $A,C\in cwk(X), \alpha, \beta \in
\mathbb{R}^+;$
\item[R.2)] $d_H(A,C)=\|i(A)-i(C)\|_{\infty},\ \ A,C\in cwk(X)$;
\item[R.3)] $i(cwk(X))=\overline{i(cwk(X))}\ \ \ \ {\rm (norm\ closure). \ }$
\end{description}

We say that
    \begin{definition}\label{H-MS} \rm
 A multifunction $\Gamma:[0,1]\to cwk(X)$ is said to be:
 \begin{description}
 \item[\ref{H-MS}.1)]  {\it Henstock} (resp. {\it McShane})
   integrable on $[0,1]$,  if there exists   $\varPhi_{\Gamma}([0,1]) \in cwk(X)$
    such that for every $\varepsilon > 0$ there exists a gauge $\gamma$ on $[0,1]$
such that for each partition 
   $\{(I_1,t_1), \dots,(I_p,t_p)\}$ in  $\Pi_{\gamma}^P$   (resp. in $  \Pi_{\gamma}$), we have
\begin{eqnarray}\label{e14}
d_H \left(\varPhi_{\Gamma}([0,1]),\dot{\sum}_{i=1}^p\Gamma(t_i)|I_i|\right)<\varepsilon\,.
\end{eqnarray}
\item[\ref{H-MS}.2)]
 {\it Birkhoff} 
   integrable on $[0,1]$,
   if there exists   $\varPhi_{\Gamma}([0,1]) \in cwk(X)$
 such that: for every $\varepsilon > 0$  there is a countable
partition $\Pi_0$ in  $\mathscr{L}$  of $[0,1]$ such that for every  partition $\Pi = (A_n)_n$
 of $[0,1]$  in $\mathscr{L}$
finer than $\Pi_0$ and any choice $ (t_n)_n$, $t_n \in A_n$, the series
$\dot{\sum}_n \lambda(A_n) \Gamma(t_n)$
 is unconditionally convergent  
 and
\begin{eqnarray}\label{e14-a}
d_H \big(\varPhi_{\Gamma}([0,1]),\dot{\sum}_n \Gamma(t_n)\lambda(A_n)\big)<\varepsilon\,.
\end{eqnarray}
\end{description}
\end{definition}

\begin{remark}\label{nota} \rm 
\phantom{a}
\begin{enumerate}[label=\alph*)]
\item A multifunction $\Gamma:[0,1]\to cwk(X)$ is said to be {\it Henstock} (resp. {\it McShane or Birkhoff})
   integrable on $I\in\mathscr{I}$ if $\Gamma 1_I$ is respectively integrable on $[0,1]$. \\
   We then write  ${\scriptstyle (H)}\int_I \Gamma\,dt:=\varPhi_{\Gamma 1_I}([0,1])$ (resp.  
   ${\scriptstyle (M)}\int_I\Gamma\,dt:=\varPhi_{\Gamma 1_I}([0,1]$) or 
   ${\scriptstyle (Bi)}\int_I\Gamma\,dt:=\varPhi_{\Gamma 1_I}([0,1]$)).
 It is known that a multifunction that is Henstock (McShane, Birkhoff) integrable on $[0,1]$ is integrable in the same manner 
  on each $I\in \mathscr{I}$ (see e.g. \cite{CDPMS1}).

From the definition, it 
follows at once 
 that if \( \Gamma \) is McShane  integrable, then it is also Henstock  integrable, yielding the same integral values.  
\item According to H\"{o}rmander's equality \cite{CV}
\[\phantom{AaA}  d_H \Big(\varPhi_{\Gamma}([0,1]),\dot{\sum}_{j=1}^p \Gamma(t_j)|I_j|\Big) = 
\sup_{x^* \in B_{X^*}}  \Big\vert s(x^*, \varPhi_{\Gamma}([0,1])) - 
\sum_{j=1}^p s(x^*,\Gamma(t_j) |I_j| ) \Big\vert \]
  the embeddings $i$    allow to 
    reduce  the  gauge integrability of multifunctions  to  the    gauge integrability of the functions  $i \circ \Gamma$. 
In fact,  if $z=\int i\circ\Gamma\, d\lambda \in l_{\infty} (B_{X^*})$, then $K\in{cwk(X)}$
 exists   with $i(K)=z$.\\
So it is possible to introduce the gauge integrals as in the single-valued case 
and a multifunction $\Gamma:[0,1]\to cwk(X)$  is  Henstock  (resp. McShane)
   integrable  if and only if the single valued function $i\circ \Gamma$ is Henstock    (resp. McShane) integrable in the usual sense.
\item 
For the Birkhoff case it is possible to do the same.  
The Birkhoff integral can be considered    a gauge integral.
 In fact  
a vector valued function $g:[0,1] \to Y$, where $Y$ is a Banach space,  is Birkhoff-integrable if and only if
there exists an element $y\in Y$ such that
for each $\varepsilon>0$ a {\it measurable} gauge $\gamma$ can be found on $[0,1]$, such that,
as soon as ${\mathscr P}=\{(t_j,I_j):j=1,...,p\}$ is any $\gamma$-fine
partition of $[0,1]$, it holds
$\| \sum_{i=1}^p|I_i|g(t_i)  \, - \, y \| < \varepsilon$.
\\ 
For the equivalence of this definition with the more
common notion of Birkhoff integrability see \cite{F,F1994,Po,Po1,CCGS,R1,R2,R3,ccgis,nara,solo}.\\
There is a large literature on $X$-valued gauge integrals, we quote here
 \cite{cmn,cr2005,CG,DM,FMNR,F,F1994,K2002,K,M,Po,Po1,nara,Me}.\\

\item 
The embedding construction cannot be applied  to the Pettis integrability of multifunction directly. In fact the Pettis integrability of 
$\Gamma$ and $i \circ \Gamma$ are not equivalent. See \cite[Proposition 4.5]{CKR2009} for a sufficient condition
under which they are equivalent.
 In general, according to  \cite[Proposition 4.4]{CKR2009},  if $i \circ \Gamma$ is Pettis integrable then also $\Gamma$ is Pettis integrable and 
\[ i \circ {\scriptstyle  (P)}\int_A \Gamma\, d\mu = {\scriptstyle  (P)}\int_A  i \circ \Gamma\, d \mu, \quad \forall \, A \in \Sigma.
\]
\item Any Henstock integrable multifunctions $\Gamma$  is also Henstock-Kurzweil-Pettis integrable,
whereas if $\Gamma$ is McShane integrable then it is also Pettis integrable, 
 since $i \circ \Gamma$ is McShane and then Pettis integrable.
To show   multifunctions that are Pettis integrable but non McShane integrable (resp.  Henstock integrable) we quote \cite[Example 2.10]{CDPMS3} (resp.  \cite[Example 2.12]{CDPMS3}); 
whereas in  \cite[Example 2.11]{CDPMS3} a McShane integrable but non Birkoff integrable multifunction was given. 
\item When $X$ is an arbitrary Banach space, each scalarly measurable selection of a Pettis (resp. Henstock-Kurzweil-Pettis)
 integrable multifunction $\Gamma$ is also Pettis (resp. Henstock-Kurzweil-Pettis) integrable.
 For the gauge integrals  the behavior is different. 
\end{enumerate}
\end{remark}

If 
 we denote by $\mathscr{S}_H(\Gamma), \mathscr{S}_{MS}(\Gamma)$ and $ \mathscr{S}_{Bi}(\Gamma)$ the families of all scalarly measurable selections of $\Gamma$
 that are integrable in the sense of Henstock, McShane, Birkoff respectively, we have the following:
\begin{theorem}\label{3.1di DPM2014}
{\rm \cite[Theorem 3.1]{DPM2014}}
Let $X$ be any Banach space.
If $\Gamma: [0,1] \to cwk(X)$ is Henstock integrable in $cwk(X)$  then $\mathscr{S}_H(\Gamma)$ is non-empty.
 If moreover $\Gamma$ is also Pettis integrable, then $\mathscr{S}_{MS}(\Gamma)$ is non-empty.
\end{theorem}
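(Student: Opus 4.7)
My plan is a two-stage argument. First, secure a scalarly measurable selection $f$ of $\Gamma$; second, upgrade it to a Henstock integrable selection by exploiting the Pettis integrability of the translated multifunction $G(t):=\Gamma(t)-f(t)$. The additional Pettis hypothesis in the second assertion then further upgrades $f$ to a McShane integrable selection.

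Since $\Gamma$ is Henstock integrable in $cwk(X)$, H\"ormander's equality (Remark \ref{nota}) implies that each support function $s(x^*,\Gamma(\cdot))$ is Henstock--Kurzweil integrable on $[0,1]$, so $\Gamma$ is scalarly measurable and HKP integrable by Theorem \ref{i-ii}. The Cascales--Kadets--Rodr\'iguez selection theorem \cite[Theorem 3.8]{CKR2010} then yields a scalarly measurable selection $f:[0,1]\to X$ of $\Gamma$. By Theorem \ref{i-iii}, the translate $G(t):=\Gamma(t)-f(t)$ is $cwk(X)$-valued (with $0\in G(t)$) and Pettis integrable. For a tagged partition $\mathscr{P}=\{(I_i,t_i)\}$ write $\sigma_\Gamma(\mathscr{P}):=\dot{\sum}_i \Gamma(t_i)|I_i|$, $\sigma_G(\mathscr{P}):=\dot{\sum}_i G(t_i)|I_i|$ and $\sigma_f(\mathscr{P}):=\sum_i f(t_i)|I_i|$, so that $\sigma_\Gamma(\mathscr{P})=\sigma_G(\mathscr{P})+\{\sigma_f(\mathscr{P})\}$. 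The triangle inequality for $d_H$, combined with the equality $d_H(C+\{x\},C+\{y\})=\|x-y\|$ for singleton translates, yields
\[
\|\sigma_f(\mathscr{P}_1)-\sigma_f(\mathscr{P}_2)\|\leq d_H(\sigma_\Gamma(\mathscr{P}_1),\sigma_\Gamma(\mathscr{P}_2))+d_H(\sigma_G(\mathscr{P}_1),\sigma_G(\mathscr{P}_2)).
\]
Given $\varepsilon>0$, Henstock integrability of $\Gamma$ produces a gauge $\gamma_1$ controlling the first summand by $\varepsilon/2$ on $\gamma_1$-fine Perron partitions. Pettis integrability of $G$, combined with the uniform integrability of $\{s(x^*,G):x^*\in B_{X^*}\}$ supplied by Theorem \ref{compilationb} and a Birkhoff/Saks--Henstock-style argument performed inside the R{\aa}dstr\"{o}m-embedded space $l_\infty(B_{X^*})$, is used to produce a measurable gauge $\gamma_2$ controlling the second summand by $\varepsilon/2$ on $\gamma_2$-fine partitions. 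Setting $\gamma:=\min(\gamma_1,\gamma_2)$ yields the Cauchy criterion for $\sigma_f(\cdot)$ along $\gamma$-fine Perron partitions, so completeness of $X$ gives $f\in\mathscr{S}_H(\Gamma)$.

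If moreover $\Gamma$ is Pettis integrable in $cwk(X)$, then the implication \ref{compilation}.1)$\Rightarrow$\ref{compilation}.3) of Theorem \ref{compilationb} forces every scalarly measurable selection of $\Gamma$, and in particular the $f$ just produced, to be Pettis integrable; since $f$ is then simultaneously Henstock and Pettis integrable on $[0,1]$, the standard Fremlin--Mendoza characterization of McShane integrability (``McShane = Henstock + Pettis'' for Banach-valued functions) places $f$ in $\mathscr{S}_{MS}(\Gamma)$. The main obstacle is the preceding construction of the gauge $\gamma_2$: Pettis integrability of a $cwk(X)$-valued multifunction does not itself come packaged with a gauge in the sense of Definition \ref{H-MS}, so the passage from uniform integrability of the support functions of $G$ to a genuine control of $d_H$-oscillations of the Minkowski Riemann sums $\sigma_G(\mathscr{P})$ on fine partitions is the delicate technical point, and is where the embedding $i:cwk(X)\hookrightarrow l_\infty(B_{X^*})$ together with a Saks--Henstock-type refinement is essential.
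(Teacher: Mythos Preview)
Your argument has a genuine gap at exactly the point you flag as ``the delicate technical point,'' and the approach cannot be repaired as stated: you pick an \emph{arbitrary} scalarly measurable selection $f$ of $\Gamma$ and attempt to prove that this $f$ is Henstock integrable. If the argument worked it would work for every scalarly measurable selection, since nothing special about $f$ is ever used; but \cite[Proposition~3.2]{CDPMS1} exhibits a Henstock (indeed variationally McShane) integrable $cwk(X)$-valued multifunction possessing a scalarly measurable selection that is not Henstock integrable. Concretely, Pettis integrability of $G=\Gamma-f$ yields only the uniform integrability of $\{s(x^*,G):x^*\in B_{X^*}\}$, a purely scalar condition that does not control the $d_H$-oscillations of the Minkowski Riemann sums $\sigma_G(\mathscr P)$; controlling those oscillations is precisely the Henstock (or McShane) integrability of $G$, which you cannot obtain without already knowing that $f$ is Henstock integrable, so the reasoning is circular. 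The R{\aa}dstr\"om embedding does not rescue this step either, since by Remark~\ref{nota} the Pettis integrability of $G$ need not pass to $i\circ G$.

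The paper's proof uses a completely different mechanism: rather than an arbitrary selection, it fixes a \emph{strongly exposed point} $x_0$ of the weakly compact convex set $\varPhi_\Gamma([0,1])$, together with a strongly exposing functional $x_0^*\in B_{X^*}$, and takes as selection a scalarly measurable $g$ with $x_0^*g(t)=s(x_0^*,\Gamma(t))$. A single gauge $\gamma$, chosen simultaneously for the Henstock integrability of $\Gamma$ and the Henstock--Kurzweil integrability of the scalar function $x_0^*g$, then suffices: for any $\gamma$-fine Perron partition the Riemann sum $\sigma_g(\mathscr P)$ lies in $\sigma_\Gamma(\mathscr P)$ and hence is $d_H$-near $\varPhi_\Gamma([0,1])$, while $x_0^*(\sigma_g(\mathscr P))$ is close to $x_0^*(x_0)$; the strongly exposed property is exactly what converts these two facts into $\|\sigma_g(\mathscr P)-x_0\|$ small. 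Your final step (Henstock $+$ Pettis $\Rightarrow$ McShane for the selection) is correct and matches the paper's conclusion, but it rests on the unproved first part.
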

\begin{proof}
We give a sketch of the proof.
Since $\varPhi_{\Gamma}([0,1]) \in cwk(X)$, there exists a strongly exposed point
$x_0 \in \varPhi_{\Gamma}([0,1])$ 
and let $x_0^* \in B_{X^*}$ be such that $x^*_0(x_0) >x^*_0(x)$ for every $x \in \varPhi_{\Gamma}([0,1]) \setminus \{ x_0\}$.\\
 Let  $G$ as in formula \eqref{G} of Theorem \ref{i-ii} for $x_0^*$. By \cite[Proposition 2]{DPM2009}
  $G$ is HKP integrable  and Pettis integrable if $\Gamma$  is.\\
  The selection $g$ constructed as in Theorem \ref{i-ii} is then  HKP (Pettis) integrable and
\[ x^*_0(x_0) = \int_{[0,1]}  x^*_0 (g) d\lambda.\]
Let  $\gamma$ be  a gauge  suitable for the definitions of the Henstock integrability of $\Gamma$ and the Henstock-Kurzweil integrability of $x^*_0 g$. \\
Because  $x_0$ is a strongly exposed point it is possible to prove that 
$\gamma$ satisfies the definition of Henstock integrability for $g$.  Then the McShane integrability of $g$ follows since $g$ is Pettis if $\Gamma$ is.
\end{proof}

As an application of the existence of Henstock integrable selections for a Henstock integrable multifunction we have
\begin{theorem}
{\rm (\cite[Theorem 3.2]{CDPMS2})}
Let
$X$ be any Banach space and let  $\Gamma: [0,1] \to cwk(X)$ be a multifunction. Then the following are equivalent
\begin{itemize}
\item[$(i)$] $\Gamma$ is Henstock integrable in $cwk(X)$;
\item[$(ii)$] $\mathscr{S}_H (\Gamma) \neq \emptyset$ and for every $f \in \mathscr{S}_H (\Gamma)$ the multifunction $G$ defined by $\Gamma(t) = G(t) + f(t)$, is McShane integrable.
\end{itemize}
\end{theorem}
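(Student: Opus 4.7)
The argument splits naturally into the two implications. The easy direction $(ii)\Rightarrow(i)$ reduces to linearity: by Remark~\ref{nota}(a) the McShane integrable $G$ is also Henstock integrable, so adding the Henstock integrable selection $f$ recovers $\Gamma = G + f$ as a sum of two Henstock integrable objects, hence Henstock integrable in $cwk(X)$.

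For the harder direction $(i)\Rightarrow(ii)$ the plan is as follows. First, Theorem~\ref{3.1di DPM2014} immediately yields $\mathscr{S}_H(\Gamma)\neq\emptyset$. Next, fix an arbitrary $f\in\mathscr{S}_H(\Gamma)$ and define the translated multifunction $G(t):=\{x-f(t):x\in\Gamma(t)\}$, so that $\Gamma(t)=G(t)+f(t)$. Translation preserves convexity and weak compactness, so $G$ takes values in $cwk(X)$, and the crucial structural feature is that $0=f(t)-f(t)\in G(t)$ for every $t\in[0,1]$. Linearity of Henstock integration then furnishes the Henstock integrability of $G$.

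What remains is to upgrade the Henstock integrability of $G$ to McShane integrability, and this is where I expect the main difficulty. The condition $0\in G(t)$ translates into $s(x^{*},G(\cdot))=s(x^{*},\Gamma(\cdot))-x^{*}(f(\cdot))\geq 0$ for every $x^{*}\in X^{*}$. Since Henstock integrability implies HKP integrability (Remark~\ref{nota}(e)), each $s(x^{*},G(\cdot))$ is Henstock--Kurzweil integrable on $[0,1]$, and a non-negative HK integrable real function is automatically Lebesgue integrable with the same value. Consequently the HKP integral set $M_G(I)\in cwk(X)$ satisfies the Pettis identity $s(x^{*},M_G(I))=\int_I s(x^{*},G(t))\,d\lambda$, so $G$ is in fact Pettis integrable.

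The final step is to promote this combination of Henstock integrability, Pettis integrability, and pointwise non-negativity of all support functions into McShane integrability of $G$. The cleanest route is to transfer the problem through the R{\aa}dstr\"{o}m embedding $i\circ G:[0,1]\to l_\infty(B_{X^{*}})$, which takes pointwise non-negative values; for such a function, a Saks--Henstock type lemma permits the replacement of non-Perron tags in an arbitrary $\gamma$-fine partition by admissible ones with error controlled by the (now Lebesgue) integrals of the support functions, upgrading Henstock to McShane on the image side. By the isometric and linear nature of $i$ (properties R.1--R.2), this transfers back to $d_H$-closeness of arbitrary $\gamma$-fine Minkowski sums to $\varPhi_{G}([0,1])$, i.e.\ to McShane integrability of $G$ in $cwk(X)$.
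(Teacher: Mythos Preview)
The paper does not supply its own proof of this statement; it quotes the result from \cite[Theorem~3.2]{CDPMS2} as an application of Theorem~\ref{3.1di DPM2014}, so there is nothing in the present text to compare against directly.

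Your strategy is the correct one and most of it is fine. The direction $(ii)\Rightarrow(i)$ is indeed immediate by linearity. For $(i)\Rightarrow(ii)$, the non-emptiness of $\mathscr{S}_H(\Gamma)$ via Theorem~\ref{3.1di DPM2014}, the translation $G=\Gamma-f$ with $0\in G(t)$, and the Henstock integrability of $G$ are all correct. Your argument for the Pettis integrability of $G$ is slightly incomplete---you verify the identity $s(x^{*},M_G(I))=\int_I s(x^{*},G)\,d\lambda$ only for $I\in\mathscr{I}$, whereas Pettis integrability requires it for every $A\in\mathscr{L}$---but this is easily repaired by invoking Theorem~\ref{i-iii} directly, since $f\in\mathscr{S}_H(\Gamma)\subset\mathscr{S}_{HKP}(\Gamma)$.

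The genuine gap is the final paragraph. Upgrading $G$ from ``Henstock plus Pettis plus $0\in G(t)$'' to McShane integrable is the entire technical content of the theorem, and you have not actually carried it out. What you wrote---that ``a Saks--Henstock type lemma permits the replacement of non-Perron tags\dots with error controlled by the (now Lebesgue) integrals of the support functions''---describes the conclusion you want, not a mechanism that delivers it. The Saks--Henstock lemma controls $\gamma$-fine Perron \emph{sub}partitions, not free-tagged McShane partitions; moving a tag outside its interval produces an error that must be bounded \emph{uniformly over $x^{*}\in B_{X^{*}}$}, and pointwise non-negativity of $i\circ G$ in $l_\infty(B_{X^{*}})$ does not by itself furnish that uniform control. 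Producing this estimate is precisely the work done in \cite{CDPMS2} (and is essentially the substance behind the implication $(v)\Rightarrow(i)$ of Theorem~\ref{t3}), and it is absent from your proposal. As written, the key step remains a black box.
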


\begin{remark} \rm \phantom{a}
\begin{enumerate}[label=\alph*)]
\item The existence of Birkhoff integrable selections 
 for a Birkhoff integrable multifunction $\Gamma$
follows (see  \cite[Theorem 3.4]{CDPMS1})
  since $i \circ \Gamma$ is in particular McShane integrable. 
Therefore a  McShane integrable selection $g$ can be obtained and a  measurable  gauge for $g$ can be chosen  since $\Gamma$ is Birkhoff integrable.
\item 
 In general, for the considered gauge integrals,   not every scalarly measurable selection of a given multifunction is integrable
 in the same sense of the multifunction, 
 as proven in \cite[Proposition 3.2]{CDPMS1}.
 There the example works for a variationally McShane integrable multifunction (see Section \ref{s6} below). But according to Remark \ref{rem6.5} below, each variationally McShane integrable multifunction
 is also McShane and Henstock integrable.
Moreover (see \cite[Remark 3.5]{CDPMS1}) the example works also the Birkhoff integral. \\ 
It needs to require additional conditions on the Banach space $X$
or on the target space of the multifunction to obtain 
the integrability of all measurable selections in the same sense of the given multifunction.
In fact,  
 if $X$ is separable and we consider a $ck(X)$-valued multifunction $\Gamma$ 
 or,  if the multifunction $\Gamma$ has an almost separable range and  if $\Gamma$ is    Henstock (resp. McShane) integrable, then each scalar measurable 
 selection of $\Gamma$ is integrable in the same sense (see \cite{DPM2006a,CDPMS1}). 
 
 The same is true  if we consider $cwk(X)$-valued multifunctions taking values in any Banach space X with the property
 that the Pettis and the McShane integrability coincide (see \cite[Proposition 3.1]{CDPMS1} and  \cite{dp-p,D2010}  and the references inside, 
for the Banach spaces with such a property).
 \item
Finally, when $X$ is separable,  if $\Gamma$ is Bochner measurable and  $ck(X)$-valued and   if all measurable selections of $\Gamma$ are McShane (resp. Birkoff) integrable then $\Gamma$ is McShane (resp. Birkoff) integrable  \cite[Proposition 3.14]{CDPMS1}. 
 If we assume that $\Gamma$ is $cwk(X)$-valued this is false in general, also under the separability assumption of the space,  as shown  in \cite[Remark 3.15]{CDPMS1}.
\end{enumerate}
\end{remark}
Moreover

\begin{theorem}\label{t3}
{\rm \cite[Theorem 3.3]{CDPMS2}}
Let $X$ be any Banach space and let  $\Gamma:[0,1]\to {cwk(X)}$ be a  scalarly measurable multifunction.    Then the following conditions are equivalent:
\begin{itemize}
\item[$(i)$] \, 
$\Gamma$ is McShane integrable;
\item[$(ii)$] \, 
$\Gamma$ is  Henstock integrable  and $\mathscr{S}{}_H(\Gamma)\subset\mathscr{S}{}_{MS}(\Gamma)$.
\item[$(iii)$] \, 
$\Gamma$ is  Henstock integrable  and $\mathscr{S}{}_H(\Gamma)\subset\mathscr{S}{}_P(\Gamma)$;
\item[$(iv)$] \, 
$\Gamma$ is  Henstock integrable  and $\mathscr{S}{}_P(\Gamma)\neq \emptyset$.
\item[$(v)$] \, 
$\Gamma$ is Henstock and Pettis integrable.
\end{itemize}
\end{theorem}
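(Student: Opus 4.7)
The plan is to prove the cycle $(i)\Rightarrow(ii)\Rightarrow(iii)\Rightarrow(iv)\Rightarrow(v)\Rightarrow(i)$. The main tools will be: the (unlabeled) decomposition theorem stated just above, asserting that under Henstock integrability of $\Gamma$ one has $\mathscr{S}_H(\Gamma)\neq\emptyset$ and for every $f\in\mathscr{S}_H(\Gamma)$ the multifunction $G$ with $\Gamma=G+f$ is McShane integrable; Theorem~\ref{3.1di DPM2014}, which additionally produces $f\in\mathscr{S}_{MS}(\Gamma)$ when $\Gamma$ is Henstock and Pettis integrable; Remark~\ref{nota}(e)--(f), i.e.\ that McShane integrability of $\Gamma$ implies Pettis integrability of $\Gamma$ (via $i\circ\Gamma$) and that every scalarly measurable selection of a Pettis integrable multifunction is Pettis integrable; and the classical single-valued fact that a map $f\colon[0,1]\to X$ is McShane integrable if and only if it is both Henstock and Pettis integrable.

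The easy implications come first. For $(i)\Rightarrow(ii)$, McShane integrability trivially gives Henstock integrability; moreover, by Remark~\ref{nota}(e), $\Gamma$ is Pettis integrable, so every $f\in\mathscr{S}_H(\Gamma)$ is Pettis integrable by Remark~\ref{nota}(f), and combined with its Henstock integrability the single-valued equivalence yields $f\in\mathscr{S}_{MS}(\Gamma)$. The implication $(ii)\Rightarrow(iii)$ is immediate since every McShane integrable function is Pettis integrable, and $(iii)\Rightarrow(iv)$ follows from Theorem~\ref{3.1di DPM2014}, which forces $\emptyset\neq\mathscr{S}_H(\Gamma)\subseteq\mathscr{S}_P(\Gamma)$.

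The substantive implications are $(iv)\Rightarrow(v)$ and $(v)\Rightarrow(i)$, both driven by the decomposition theorem. For $(iv)\Rightarrow(v)$, fix $g\in\mathscr{S}_P(\Gamma)$ and pick any $f\in\mathscr{S}_H(\Gamma)$ (which exists by Theorem~\ref{3.1di DPM2014}); the decomposition theorem gives $G$ with $\Gamma=G+f$ McShane, hence Pettis, integrable. Since $g(t)\in\Gamma(t)=G(t)+f(t)$, the scalarly measurable map $g-f$ is a selection of $G$, and Remark~\ref{nota}(f) makes it Pettis integrable; therefore $f=g-(g-f)$ is Pettis integrable, and a direct support-function computation yields $M_{\Gamma}(A)=M_{G}(A)+\int_A f\,d\lambda\in cwk(X)$ for every $A\in\mathscr{L}$, proving Pettis integrability of $\Gamma$. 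For $(v)\Rightarrow(i)$, Theorem~\ref{3.1di DPM2014} produces $f\in\mathscr{S}_{MS}(\Gamma)\subseteq\mathscr{S}_H(\Gamma)$; the decomposition theorem gives $G$ with $\Gamma=G+f$ McShane integrable, whence $\Gamma$ itself is McShane integrable as the sum of a McShane integrable multifunction and a McShane integrable single-valued function.

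I expect the main obstacle to be the step $(iv)\Rightarrow(v)$: it is not a priori clear how to upgrade the bare existence of one Pettis selection $g$ into Pettis integrability of the whole multifunction $\Gamma$. The key trick is to use an auxiliary Henstock selection $f$ and the decomposition $\Gamma=G+f$ to transfer the Pettis information from $g$ to the translate $g-f$ (a selection of the already McShane integrable $G$), then back to $f$ and finally to $\Gamma$. A secondary, more technical point is the invocation of the single-valued equivalence ``McShane $=$ Henstock $+$ Pettis'' in arbitrary Banach spaces, a classical but non-trivial result that underlies $(i)\Rightarrow(ii)$.
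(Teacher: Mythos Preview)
The paper does not include its own proof of this theorem; it simply cites \cite[Theorem~3.3]{CDPMS2} and moves on. So there is no in-paper argument to compare against directly.

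That said, your proof is correct and is precisely the argument one would reconstruct from the tools the paper assembles around the statement: the decomposition theorem \cite[Theorem~3.2]{CDPMS2} (stated just before Theorem~\ref{t3}), Theorem~\ref{3.1di DPM2014}, Remark~\ref{nota}(e)--(f), and the single-valued Fremlin equivalence ``McShane $\Leftrightarrow$ Henstock $+$ Pettis''. In particular your handling of the only delicate step, $(iv)\Rightarrow(v)$, is the right one: producing an auxiliary $f\in\mathscr{S}_H(\Gamma)$, using the decomposition $\Gamma=G+f$ with $G$ McShane (hence Pettis) integrable, observing that the given $g\in\mathscr{S}_P(\Gamma)$ yields the scalarly measurable selection $g-f$ of $G$, and then pulling Pettis integrability back to $f$ and hence to $\Gamma=G+f$. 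This is exactly the mechanism behind \cite[Theorem~3.3]{CDPMS2}; nothing is missing.
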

This result was also given  in \cite[Theorem 3.4]{DPM2014},  for $cwk(X)$-valued multifunctions but with $ck(X)$-valued integrals.\\

\section{The variational integrals}\label{s6}
Another valuable tool for studying the integrability of a multifunction is the variational measure associated to its primitive.
In fact it is  important and useful  for example  in  differential inclusions,
\cite{A2005,CCS,C2004,PMS,G2022,M2025,Mesquita2025,Satco2008,Satco2013,Sik-2007,F2022}.
 Therefore  a variationally gauge integrability was also considered. 
 The variational case is more complicated because the 
R{\aa}dstr\"{o}m isometry works with the norm outside the sum.  We  first introduce the variational measure generated by a finitely additive  interval multimeasure.\\

We recall that

\begin{definition} \rm An interval map  $\Phi:\mathscr{I} \rightarrow cwk(X)$ is said to be   \textit{finitely additive}, 
if $\Phi(I{}_1 \cup I{}_2)=\Phi(I_1)\, \dot{+} \, \Phi(I_2)$ for every  $I_1, I_2 \in \mathscr{I}$ with $ I^{\circ}_1 \cap I^{\circ}_2= \emptyset$ and  $ I_1 \cup I_2 \in \mathscr{I}$.
In this case $\Phi$ is said to be an {\it interval multimeasure}.
\\
A map $M:\mathscr{L} \rightarrow cwk(X)$ is said to be a {\it  multimeasure} if for every $x{}^*\in X{}^*$, the map
$\mathscr{L}  \ni A\mapsto s(x{}^*,M(A))$ is a real valued measure, 
 \cite[Theorem 8.4.10]{hp}.
\\
A multimeasure $M:\mathscr{L} \rightarrow cwk(X)$ is  said to be
{\it absolutely continuous } with respect to the Lebesgue measure $\lambda$ (in short 
  $\lambda$\textit{-continuous}) and we write $M\ll\lambda$, if $M(A)=\{0\}$ for each $A \in \mathscr{L}$ with $\lambda(A)=0$.
\end{definition}
\begin{remark} \rm
 The indefinite integrals of Henstock  integrable multifunctions are interval multimeasures,
whereas   the indefinite integrals of Pettis (hence also McShane  or Birkhoff) integrable multifunctions are multimeasures (see \cite[Remark 2.7]{CDPMS2}).
\end{remark}

\begin{definition}\label{vma} \rm
The \textit{variational measure $V_\Phi: \mathscr{L} \rightarrow \mathbb{R}$ } generated by an interval multimeasure $\Phi :\mathscr{I} \rightarrow cwk(X)$
is defined by
	\[V_\Phi(E):=\inf_{\gamma}\left\{Var(\Phi,\gamma,E):\gamma \ \text{is a gauge on }E\right\},\]
where
\[Var(\Phi, \gamma,E)=\sup
\left\{
\sum_{j=1}^p\|\Phi(I_j)\|\colon
\{(I_j,t_j)\}_{j=1}^p\in\Pi_{\gamma}^P\;\mbox{ and } t_j \in E, j=1,\dots,p.
\right\}\]
\end{definition}

Let $\Gamma$ be a 
 Henstock integrable multifunction  and 
 let $f \in \mathscr{S}_H(\Gamma)$.
 Their indefinite integrals $\Phi_{\Gamma}$ and $\Phi_f$, and the 
 variational measures generated by $\Phi_{\Gamma}$  and $\Phi_f$ are related in the following way, see \cite[Proposition 2.7]{CDPMS1}:
\[
{\scriptstyle (H)}\int_I f d\lambda \in {\scriptstyle (H)}\int_I \Gamma d\lambda \quad\mbox{and}\quad V_{\Phi_f}(I)\leq{V_{\Phi_{\Gamma}}(I)}\,.\]
Note that the previous result is also applicable to McShane and Birkhoff integrable multifunctions.
 In these cases, the conclusion is valid not only  for intervals, but also for any arbitrarily measurable subsets of  $\mathscr{L}$.

\begin{definition} \rm
   A multifunction $\Gamma:[0,1]\to cwk(X)$ is said to be {\it variationally Henstock 
   integrable},
   if there exists an interval multimeasure  $\varPhi_{\Gamma}: \mathscr{I} \to {cwk(X)}$ with the following property:
   for every $\varepsilon>0$ there exists a gauge $\gamma$
   on $[0,1]$ such that for each
   $\{(I_1,t_1), \dots,(I_p,t_p)\}\in\Pi_{\gamma}^P$ 
we have:
\begin{eqnarray}\label{aa}
\sum_{j=1}^pd_H \left(\varPhi_{\Gamma}(I_j),\Gamma(t_j)|I_j|\right)<\varepsilon\,.
\end{eqnarray}
   We write then ${\scriptstyle (vH)}\int_0^1\Gamma\,d\lambda:=\varPhi_{\Gamma}([0,1])$.
 The set  multifunction  $\varPhi_{\Gamma}$  will be  called the {\it variational Henstock}  {\it primitive} of $\Gamma$.
The variational integral on a set $I \in \mathscr{I}$ can be defined in an analogous way.
   \end{definition}
   
   If in the previous definition the partitions  belong to $\Pi_{\gamma}$, we obtain the {\it variationally McShane
  integral}.
    
\begin{remark}\label{rem6.5} \rm 
\phantom{a}
\begin{enumerate}[label=\alph*)]
\item 
 When a multifunction reduces to a function \( f: [0,1] \to X \), the set \( \varPhi_f([0,1]) \) simplifies to a single vector in \( X \), and the above definition aligns with those for vector-valued functions.
\item 
By the definitions it follows that if $\Gamma$ is a variationally McShane integrable multifunction, then $\Gamma$ is variationally Henstock, Henstock and also Henstock-Kurzweil-Pettis integrable.

\item   We observe that if $\Gamma$ is a $cwk(X)$-valued McShane integrable multifunction then $\Gamma$ is variationally Henstock integrable if and only if $\Gamma$ is Bochner measurable and the variational measure associated
 with  ${\scriptstyle (MS)} \int_{\cdot} \Gamma\, d\lambda$ is $\lambda$-continuous, \cite[Theorem 3.3]{CDPMS4}. 
 \item Moreover if $\Gamma$ is variationally Henstock integrable and  $0\in \Gamma(t)$ a.e.,
then $\Gamma$ is Birkhoff integrable, \cite[Proposition 4.1]{CDPMS1}.
 \end{enumerate}
   \end{remark}

We now adress  the question of whether  there is at least a selection 
of a 
$cwk(X)$-valued variationally Henstock integrable multifunction which is 
 variationally Henstock integrable too.
The exhistence result is based on the  fact
that, 
when $X$ is an any Banach space, then  each scalarly measurable selection of a Pettis (resp. HKP)
 integrable multifunction $\Gamma:[0,1]\to{cwk(X)}$ is also Pettis (resp. Henstock-Kurzweil-Pettis) integrable (see Cascales, Kadets and Rodr\'iguez \cite{CKR2009}, Di Piazza and Musia\strokel  \cite{DPM2009} and 
  Musia\strokel \cite{M2011}).
The  result we quote here extends that given in  \cite[Theorem 3.12]{CDPMS1}.
\begin{theorem}\label{T4.1}
{\rm \cite[Theorem 5.1]{CDPMS2}}
Let $X$ be any Banach space and let   $\Gamma:[0,1]\to cwk(X)$ be a variationally Henstock integrable multifunction. Then  ${\mathcal{S}}_{vH} \neq \emptyset$ and every strongly
measurable selection of \, $\Gamma$ is also variationally Henstock integrable.
\end{theorem}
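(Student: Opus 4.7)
The plan is to split the proof into two parts: first exhibit one variationally Henstock selection of $\Gamma$, and then show every strongly measurable selection inherits this property.

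For part one, I follow the strongly exposed point construction from the sketch of Theorem \ref{3.1di DPM2014}. Since $\varPhi_\Gamma([0,1])\in cwk(X)$, a classical theorem of Lindenstrauss provides an $x_0\in\varPhi_\Gamma([0,1])$ strongly exposed by some $x_0^*\in B_{X^*}$, i.e.\ for every $\eta>0$ there is $\delta>0$ with $\{y\in\varPhi_\Gamma([0,1]):x_0^*(x_0)-x_0^*(y)<\delta\}\subseteq B(x_0,\eta)$. Define $G(t)=\{x\in\Gamma(t):x_0^*(x)=s(x_0^*,\Gamma(t))\}$, scalarly measurable by \cite[Lemma 2.4]{CKR2009}, and extract a scalarly measurable selection $g$ of $G$ via the Cascales-Kadets-Rodr\'iguez selection theorem. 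By Remark \ref{rem6.5}.b, $\Gamma$ is HKP-integrable, so $g$ is HKP-integrable by \cite{CKR2009,DPM2009,M2011}; let $\Phi_g$ denote its HKP-primitive, which is additive on $\mathscr{I}$. Combining a gauge $\gamma_1$ realizing the variational Henstock condition of $\Gamma$ with small tolerance, a gauge $\gamma_2$ for the Henstock-Kurzweil integrability of $x_0^*g$, and the modulus of strong exposedness of $x_0$, I convert $x_0^*$-deviations of $\varPhi_\Gamma$-sums from $x_0^*(x_0)$ into norm deviations of $g$-sums from $\Phi_g$-values, obtaining $\sum_j\|g(t_j)|I_j|-\Phi_g(I_j)\|<\varepsilon$, i.e.\ $g\in\mathcal{S}_{vH}(\Gamma)$.

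For part two, let $f$ be any strongly measurable selection of $\Gamma$. In particular $f$ is scalarly measurable, so by the CKR result $f$ is HKP-integrable with additive interval primitive $\Phi_f$; moreover, \eqref{3.1del3} gives $\Phi_f(I)\in\varPhi_\Gamma(I)$ for every $I\in\mathscr{I}$. To verify \eqref{aa} for $f$, given $\varepsilon>0$ pick a gauge $\gamma$ realizing \eqref{aa} for $\Gamma$ with tolerance $\varepsilon/2$. For any $\gamma$-fine Perron partition $\{(I_j,t_j)\}$, the inclusion $f(t_j)|I_j|\in\Gamma(t_j)|I_j|$ combined with the variational bound on $\Gamma$ produces points $y_j\in\varPhi_\Gamma(I_j)$ with $\sum_j\|f(t_j)|I_j|-y_j\|<\varepsilon/2$. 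The residual $\sum_j\|y_j-\Phi_f(I_j)\|$ is absorbed using the Bochner measurability of $f$ (approximation by simple functions) together with the absolute continuity of the variational measure $V_{\varPhi_\Gamma}$, yielding the required bound and hence $f\in\mathcal{S}_{vH}(\Gamma)$.

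The main obstacle lies in part one: converting the Hausdorff control $d_H(\varPhi_\Gamma(I_j),\Gamma(t_j)|I_j|)$ into a norm control on $\|g(t_j)|I_j|-\Phi_g(I_j)\|$. This is exactly where \emph{strong} exposedness (rather than mere exposedness) is indispensable, and it is the crucial geometric ingredient that was not needed in Theorem \ref{3.1di DPM2014}, where only Henstock (not variational Henstock) integrability of $g$ was required. Once this is in hand, part two reduces to a triangle-inequality argument coupled with the Bochner approximation of $f$ and the variational gauge of $\Gamma$.
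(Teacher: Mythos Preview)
Your approach diverges from the paper's and has a genuine gap in both parts. In part one, the strongly-exposed-point machinery of Theorem~\ref{3.1di DPM2014} controls only the \emph{global} Riemann sum $\sum_j g(t_j)|I_j|$ against the single point $x_0\in\varPhi_\Gamma([0,1])$: the modulus of strong exposedness is a property of $x_0$ in that one set. To get the \emph{variational} estimate $\sum_j\|g(t_j)|I_j|-\Phi_g(I_j)\|<\varepsilon$ you would need, for each $j$, that $\Phi_g(I_j)$ is strongly exposed in $\varPhi_\Gamma(I_j)$ with a modulus uniform in $j$, and nothing in your construction supplies this. In part two, the first half of your triangle inequality is fine, but the ``residual'' $\sum_j\|y_j-\Phi_f(I_j)\|$ is only bounded by $\sum_j\operatorname{diam}\varPhi_\Gamma(I_j)$, which is of the order of $Var(\varPhi_\Gamma,\gamma,[0,1])$ and has no reason to be small; neither Bochner measurability of $f$ nor $\lambda$-continuity of $V_{\varPhi_\Gamma}$ makes this sum small on a partition of the \emph{whole} interval.

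The paper avoids both difficulties by a completely different route. It first observes that variational Henstock integrability of $\Gamma$ forces $\Gamma$ to be Bochner measurable, hence strongly measurable selections exist (so no exposed-point construction is needed). For any such selection $f$, one forms $G:=\Gamma-f$; by \cite[Theorem~1]{DPM2009} $G$ is Pettis integrable, and strong measurability of $i\circ G$ (hence essential $d_H$-separability of the range) upgrades this to Pettis integrability of $i\circ G$. The inequality $V_{\Phi_f}\leq V_{\varPhi_\Gamma}$ passes $\lambda$-continuity from $\varPhi_\Gamma$ to $\Phi_f$ and then to the primitive of $G$. The key external input is then \cite[Corollary~4.1]{BDpM2}, which says that a strongly measurable Pettis integrable function whose primitive has $\lambda$-continuous variational measure is variationally Henstock integrable; applied to $i\circ G$, this gives $i\circ G$ variationally Henstock, and subtracting from $i\circ\Gamma$ yields that $f$ is variationally Henstock. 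The whole argument thus routes through the decomposition $\Gamma=G+f$ and the Radon--Nikod\'ym-type characterization in \cite{BDpM2}, rather than through any direct gauge manipulation.
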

\begin{proof}
 As
$\Gamma$ is variationally Henstock integrable, 
$\Gamma$ is also Bochner measurable (\cite[Proposition 2.8]{CDPMS1}).
So it admits strongly measurable selections, as established in \cite[Proposition 3.3]{CDPMS1}.\\
Let  $f$ be a strongly measurable selection of $\Gamma$. Since 
$f$ is HKP integrable, the multifunction 
$G$ defined as $G:=\Gamma-f$
 is Pettis integrable, as showed in \cite[Theorem 1]{DPM2009}. 
 The function 
$i \circ G$, which is  the difference between 
$i \circ \Gamma$ and $i(\{f\})$, is strongly measurable, together as 
$G$. 
Therefore,  $G$ has an essentially  $d_H$-separable range. This means that there exists a set 
$E\in \mathcal{L}$, with 
$\lambda( [0,1] \setminus E)=0$, 
such that $G(E)$ is $d_H$-separable, implying that
$i \circ G$ is Pettis integrable (see \cite[Theorem 3.4, Lemma 3.3]{cr2005}).
\\
Furthermore, since 
$\Gamma$  is variationally Henstock integrable,
 the  variational measure 
$V_{\Phi}$ associated to the indefinite integral 
of $\Gamma$
  is absolutely continuous. If 
$V_{\phi}$  is the variational measure corresponding to the HKP integral of 
$f$,  then we have  $V_{\phi}\leq V_{\Phi}$, and therefore 
$V_{\phi}$ is also absolutely continuous with respect to  $\lambda$.
 As  $\|G\|\leq \|\Gamma\|+\|f\|$, it follows that 
$V_G$  is  $\lambda$-continuous.
\\
Thus, $i \circ G$ satisfies the conditions of
\cite[Corollary 4.1]{BDpM2} which implies that $i \circ G$ is variationally Henstock integrable. Since $i(\{f\})$ is the difference between $i \circ \Gamma$ and $i \circ G$  it follows that $f$ is variationally Henstock integrable.
\end{proof}

As a byproduct of the above result we can compare the  gauges integrals and the variational one.

\begin{theorem}\label{4.3di1}
{\rm \cite[Theorem 5.5]{CDPMS2}}
Let  $X$ be any Banach space and let   $\Gamma:[0,1]\to {cwk(X)}$ be a variationally Henstock integrable multifunction.
Then    the following conditions are equivalent:
\begin{itemize}
\item[$(i)$] \, ${\mathcal{S}}_{vH}(\Gamma)\subset {\mathcal{S}}_{MS}(\Gamma)$;
\item[$(ii)$] \, ${\mathcal{S}}_{vH}(\Gamma)\subset {\mathcal{S}}_P(\Gamma)$;
\item[$(iii)$] \, ${\mathcal{S}}_P(\Gamma)\neq \emptyset$;
\item[$(iv)$] \, $\Gamma$ is   Pettis integrable;
\item[$(v)$] \,  $\Gamma$ is McShane integrable.
\end{itemize}
\end{theorem}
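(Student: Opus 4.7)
The plan is to establish the cycle $(i) \Rightarrow (ii) \Rightarrow (iii) \Rightarrow (iv) \Rightarrow (v) \Rightarrow (i)$, exploiting two consequences of the hypothesis: by Remark \ref{rem6.5}(b) the variational Henstock integrability of $\Gamma$ forces both Henstock and Henstock-Kurzweil-Pettis integrability of $\Gamma$, and by Theorem \ref{T4.1} the family $\mathcal{S}_{vH}(\Gamma)$ is non-empty. Moreover, as noted in the proof of Theorem \ref{T4.1}, $\Gamma$ is Bochner measurable and hence scalarly measurable, so Theorem \ref{t3} is applicable to $\Gamma$ throughout.

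The easy arc of the cycle proceeds as follows. For $(i) \Rightarrow (ii)$, a single-valued McShane integrable function is always Pettis integrable. For $(ii) \Rightarrow (iii)$, Theorem \ref{T4.1} supplies some $f \in \mathcal{S}_{vH}(\Gamma)$, and by $(ii)$ this $f$ also belongs to $\mathcal{S}_P(\Gamma)$. For $(iii) \Rightarrow (iv)$, since $\Gamma$ is Henstock integrable (from vH) and $\mathcal{S}_P(\Gamma) \neq \emptyset$, the equivalence $(iv) \Leftrightarrow (v)$ in Theorem \ref{t3} forces $\Gamma$ to be Pettis integrable. For $(iv) \Rightarrow (v)$, the same Theorem \ref{t3} gives that Henstock plus Pettis is equivalent to McShane.

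The main obstacle is $(v) \Rightarrow (i)$: each $f \in \mathcal{S}_{vH}(\Gamma)$ must be upgraded to an MS integrable selection. The strategy is first to show $f$ is Pettis and then to combine with its Henstock integrability. Since $f$ is variationally Henstock, it is in particular Henstock and HKP integrable, so for every $x^{*} \in X^{*}$ the scalar $x^{*} f$ is HK integrable and hence Lebesgue measurable. From $(v)$, $\Gamma$ is Pettis integrable, and the fact recorded in Remark \ref{nota} that in an arbitrary Banach space every scalarly measurable selection of a Pettis multifunction is itself Pettis integrable then yields $f \in \mathcal{S}_P(\Gamma)$. Finally, invoking the single-valued identity $\text{McShane} = \text{Henstock} + \text{Pettis}$, which is valid in any Banach space, gives $f \in \mathcal{S}_{MS}(\Gamma)$, closing the cycle.
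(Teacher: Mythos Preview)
The paper does not include a proof of this theorem; it cites \cite[Theorem~5.5]{CDPMS2} and remarks only that the result is a ``byproduct'' of Theorem~\ref{T4.1}, the additional hypothesis of RNP in the earlier version \cite[Theorem~4.3]{CDPMS1} having been removed precisely because Theorem~\ref{T4.1} now supplies $\mathcal{S}_{vH}(\Gamma)\neq\emptyset$. Your cycle is correct and matches this spirit exactly: Theorem~\ref{T4.1} is the key input for $(ii)\Rightarrow(iii)$, and the remaining implications are handled by Theorem~\ref{t3} together with standard single-valued facts.

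One small citation slip: Remark~\ref{rem6.5}(b) as written in the paper is about variationally \emph{McShane} integrable multifunctions, not variationally Henstock ones. The fact you actually need---that vH integrability of $\Gamma$ implies Henstock integrability---is nonetheless immediate from the definitions via the triangle inequality in the R{\aa}dstr\"om embedding, and then HKP integrability follows from Remark~\ref{nota}(e). Similarly, the Pettis-selection fact you invoke in the $(v)\Rightarrow(i)$ step is item~(f) of Remark~\ref{nota}. Neither point affects the validity of the argument.
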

\begin{remark}\rm 
In  \cite[Theorem 4.3]{CDPMS1}  the result had an additional hypothesis,  $X$ was assumed to have  the RNP property which ensures the existence of variationally Hestock integrable selections for $\Gamma$.
This  additional hypothesis
was removed thanks to the Theorem \ref{T4.1}. Moreover this theorem generalizes \cite[Theorem 3.4]{DPM2014}, which proved it  for $cwk(X)$-valued multifunctions with compact valued integrals.
\end{remark}

{\small 
{\bf Acknowledgement and funding.}
This study was partly funded by the Unione europea - Next Generation EU,   Research project of MUR 
PRIN 2022 PNRR: RETINA  “REmote sensing daTa INversion with multivariate functional modeling for essential climAte variables characterization” - Project code P20229SH29; CUP J53D23015950001.
\\
The authors are members of the ``Gruppo Nazionale per l'Analisi Matematica, la Probabilità e le loro Applicazioni'' (GNAMPA) of the Istituto Nazionale di Alta Matematica (INdAM) and of GRUPPO DI LAVORO UMI - Teoria dell'Approssimazione e Applicazioni - T.A.A. The lasut author is a membre of ``Centro di Ricerca Interdipartimentale Lamberto Cesari'' of the University of Perugia.
}

\Addresses
\end{document}